\documentclass[11pt]{amsart}

\usepackage[T1]{fontenc}
\usepackage[utf8]{inputenc}
\usepackage{lmodern}
\usepackage{amsmath,amssymb,amsthm,mathtools}
\usepackage{enumitem}
\usepackage{microtype}
\usepackage[margin=1in]{geometry}
\usepackage[colorlinks=true,linkcolor=blue,citecolor=blue,urlcolor=blue]{hyperref}

\newtheorem{theorem}{Theorem}[section]
\newtheorem{conjecture}[theorem]{Conjecture}
\newtheorem{proposition}[theorem]{Proposition}
\newtheorem{lemma}[theorem]{Lemma}

\theoremstyle{remark}
\newtheorem{remark}[theorem]{Remark}

\newcommand{\F}{\mathbb F}
\newcommand{\Gm}{\mathbb G_{\mathrm m}}

\newcommand{\Pj}{\mathbb P}
\newcommand{\Ql}{\overline{\mathbb Q}_{\ell}}
\newcommand{\Tr}{\operatorname{Tr}}
\newcommand{\Nr}{\operatorname{Nr}}
\newcommand{\Frob}{\operatorname{Frob}}
 
\newcommand{\Swan}{\operatorname{Swan}}
\newcommand{\Kl}{\operatorname{Kl}}

\newcommand{\TInd}{\operatorname{TInd}}

\newcommand{\Gal}{\operatorname{Gal}}
\newcommand{\Sym}{\operatorname{Sym}}

\newcommand{\cF}{\mathcal F}
\newcommand{\cK}{\mathcal K}
\newcommand{\cL}{\mathcal L}
\newcommand{\cG}{\mathcal G}
\newcommand{\cH}{\mathcal H}

\newcommand{\geom}{\mathrm{geom}}

\def\GL{\operatorname{GL}}
\def\der{\operatorname{der}}
\def\Lie{\operatorname{Lie}}

\newcommand{\Nm}{\operatorname{N}}

\title{Matrix Kloosterman sums and product-trace estimates for semisimple algebras
}
\author{Xuejun Guo}
\address{School of Mathematics\\Nanjing University\\Nanjing 210093, China}
\email{guoxj@nju.edu.cn}
\author{Chen Lin}
\address{School of Mathematics\\Nanjing University\\Nanjing 210093, China}
\email{chen.lin@smail.nju.edu.cn}
\author{Chenhao Tang}
\address{Morningside Center of Mathematics\\Academy of Mathematics and Systems Science\\Chinese Academy of Sciences; University of the Chinese Academy of Sciences, Beijing 100190, China}
\email{tangchenhao25@mails.ucas.ac.cn}
\date{}

\subjclass[2020]{Primary 11T24; Secondary 11L05, 11L40, 14F20}

\keywords{Kloosterman sum, Kloosterman sheaf, tensor induction}

\begin{document}

\begin{abstract}
Let $k=\F_q$, $E=\F_{q^n}$ and $\Tr=\Tr_{E/k}$. For $r\ge 2$, $a\in k^{\times}$ and $x\in E^{\times}$, let $\Nm(E,r,x,a)$ be the number of $r$-tuples $(x_1,\cdots,x_r)$ in $(E^{\times})^r$ satisfying
$x_1\cdots x_r=x$ and $\Tr(x_1+\cdots+x_r)=a$. We prove 
$\left|\Nm(E,r,x,a)-\left((q^n-1)^{r-1}+(-1)^r\right)/q\right|\le (r^n-1) q^{\frac{(r-1)n-1}{2}}$. This proves the square-root estimate predicted in Wan's conjecture and generalizes a previous result of Moisio and Wan. For a finite semisimple algebra
$B=\prod\limits_{i=1}^s M_{d_i}(\F_{q^{n_i}})$ over $k$ and a regular element $x\in B^{\times}$, the same method combined with Zelingher's formula leads to analogous square-root estimates.
\end{abstract}

\maketitle

\section{Introduction and statement of results}
The problem of prescribing the trace and the norm simultaneously is a classical counting problem over finite fields. For a field extension $\F_{q^n}/\F_q$, Katz obtained a square-root estimate using exotic Kloosterman sheaves \cite{KatzSoto}; related estimates were obtained by Moisio and by Moisio-Wan using hyper-Kloosterman sums and the
Hasse-Davenport relation \cite{MoisioKloosterman,MoisioWan}. In the geometric case $B=\F_{q}^n$, the same problem is the point-counting problem for
a family of toric Calabi-Yau hypersurfaces; see
\cite{RojasLeonWanMoments,WanLectures}. The field and geometric cases were subsequently unified and extended to arbitrary finite \'etale algebras by
Lin and Wan \cite{LinWanEtale}. The trace-norm $L$-functions were also studied by Rojas-Le\'on
\cite{RojasLeonTraceNorm}.

These commutative results form the background for Wan's recent extension to finite semisimple algebras \cite{WanNormTrace}. The key point of that work is
a reduction from a general semisimple algebra to a split finite \'etale algebra. The reduction combines the Hasse-Davenport relation with Eichler's formula for Gauss sums over general linear groups
\cite{EichlerGauss}; related proofs and formulations of the matrix Gauss sum identity can be found in \cite{LamprechtGauss,KimGauss,LiHuGauss}. It yields
a square-root estimate for the number of elements with prescribed reduced trace and reduced norm, as well as a corresponding estimate for Kloosterman
sums over finite semisimple algebras.

The product-trace problem considered here was proposed in the final section of \cite{WanNormTrace}. Its exponential-sum counterpart is the classical
hyper-Kloosterman sum when $B=\F_q$, whereas for a matrix algebra it is the matrix Kloosterman sum studied by Zelingher \cite{ZelingherMatrix}. For a
regular matrix, Zelingher's formula expresses this sum in terms of Frobenius traces on symmetric powers of the Kloosterman sheaf. Thus, Katz's theory of
Kloosterman sheaf \cite{KatzGKM}, together with Zelingher's formula, is
directly suited to the product-trace question. The purpose of the present paper is to prove the expected square-root estimate first for field extensions and then for arbitrary finite semisimple algebras.

We now formulate the product-trace problem studied in this paper. 

Let $p$ be a prime and $\F_q$ the finite field of $q$ elements with characteristic $p$. For any positive integers $m$ and $d$, let $\F_{q^m}$ denote the extension of $\F_q$ with degree $m$ and $M_{d}(\F_{q^m})$ denote the $d\times d$ matrix algebra over $\F_{q^m}$. By the Artin-Wedderburn theorem, a  finite semisimple algebra over $\F_q$ has the form
\begin{equation}\label{B}
B = M_{d_1}(\F_{q^{n_1}}) \times \cdots \times M_{d_s}(\F_{q^{n_s}})
\end{equation}
with dimension $N=n_1 d_{1}^2+\cdots + n_s d_{s}^2$. 
Denote the reduced trace and the reduced norm of $B$ by $\Tr_B$ and $\Nr_B$ respectively. For any given $a\in \F_q$ and $b\in \F_q^{\times}$, let
\[
\Nm_B(a, b)= \# \{ x \in B^{\times} \mid \Tr_B(x)=a,~\Nr_B(x)=b\}.
\]
Wan proved the following theorem in \cite{WanNormTrace} using the Hasse-Davenport relation of Kloosterman sums. 

\begin{theorem}[Wan, Theorem 1.1 of \cite{WanNormTrace} ]\label{wanthm}
Let $B$ be a finite semisimple algebra over $\F_q$ as given in \eqref{B}. For $a \in \F_q$ and $b\in\F_q^{\times}$, we have
\[
\left|\Nm_B (a, b) - \left( \frac{|B^{\times}|}{q(q-1)} + \frac{(-1)^{\sum_{i=1}^s d_i}q^{\sum_{i=1}^s n_i \binom{d_i}{2}}}{q} \right) \right| \le\left(\sum_{i=1}^s d_i n_i -1\right)q^{\frac{N-2}{2}}.
\]       
\end{theorem}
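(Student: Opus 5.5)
We count $\Nm_B(a,b)$ by character sums over $B^\times$, detecting the trace condition with additive characters and the norm condition with multiplicative characters. Fix a nontrivial additive character $\psi$ of $\F_q$. Then
\[
\Nm_B(a,b)=\frac{1}{q(q-1)}\sum_{t\in\F_q}\sum_{\chi}\ \sum_{x\in B^\times}\psi\bigl(t(\Tr_B(x)-a)\bigr)\chi\bigl(\Nr_B(x)b^{-1}\bigr),
\]
where $\chi$ runs over the characters of $\F_q^\times$. We split the sum into three parts.

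\emph{The term $t=0$.} Here the sum over $x$ detects only $\Nr_B(x)=b$. Since $\Nr_B:B^\times\to\F_q^\times$ is surjective, each fibre has size $|B^\times|/(q-1)$. So this term contributes $|B^\times|/(q(q-1))$, the main term.

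\emph{The terms $t\neq 0$.} Substituting $x\mapsto t^{-1}x$ rewrites each inner sum as a Gauss sum on $B^\times$,
\[
G_B(\chi,\psi)=\sum_{x\in B^\times}\chi(\Nr_B x)\psi(\Tr_B x),
\]
times a factor $\chi(t)^{-N'}\psi(-ta)\bar\chi(b)$, where $N'=\sum d_i$ is the reduced degree. The Gauss sum factors over the simple components. For each $M_{d}(\F_{q^{n}})$ we use Eichler's formula
\[
G_{\GL_d(\F_{Q})}(\chi\circ\Nm,\psi\circ\Tr)=Q^{\binom d2}\,g_Q(\chi\circ\Nm_{\F_Q/\F_q})^{d},\qquad Q=q^{n}.
\]
Then the Hasse--Davenport relation gives $g_Q(\chi\circ \Nm)=(-1)^{n-1}g_q(\chi)^{n}$. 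Hence
\[
G_B(\chi,\psi)=\pm\,q^{\sum n_i\binom{d_i}{2}}\,g_q(\chi)^{m},\qquad m=\sum d_in_i,
\]
with an explicit sign. This is the reduction to the split étale algebra $\F_q^m$.

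\emph{Evaluating the pieces.}
\begin{itemize}
\item For $\chi$ trivial, $g_q(\chi)=-1$. The sum over $t\ne0$ of $\psi(-ta)$ is $q\delta_{a=0}-1$. This yields the secondary term $(-1)^{\sum d_i}q^{\sum n_i\binom{d_i}2}/q$, together with an $a=0$ correction of the same size, which is absorbed in the error.
\item For $\chi$ nontrivial, $|g_q(\chi)|=\sqrt q$, and the sum over $t$ is again a Gauss sum of absolute value at most $\sqrt q$. This gives a total of order $q^{\sum n_i\binom{d_i}2}\,q^{(m+1)/2}/q$, which is of size $q^{(N-2)/2}$ up to a constant, since $N=\sum n_id_i^2=m+2\sum n_i\binom{d_i}2$.
\end{itemize}

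\emph{Sharp constant.} The crude bound has constant of order $q-2$. To get the constant $m-1$, we reorganise instead: after Eichler and Hasse--Davenport, the whole count equals $q^{\sum n_i\binom{d_i}{2}}$ times the analogous count for the split algebra $\F_q^m$ with the same $a,b$, up to the explicit correction. That split count is a point count on the toric hypersurface $\{x_1\cdots x_m=b,\ \sum x_i=a\}$, and is expressed through the hyper-Kloosterman sum $\Kl_m$. Deligne's bound gives $|\Kl_{m}|\le m\,q^{(m-1)/2}$, and the Kloosterman-sheaf/cohomological count gives total weight $m-1$.

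The main obstacle I expect is this last bookkeeping. One must track the signs from Hasse--Davenport and Eichler, and treat the cases $a=0$ and $a\ne0$ uniformly, so that the error is exactly $m-1$ copies of $q^{(N-2)/2}$ rather than a cruder constant. I would first carry it out for $B=\F_q^m$ via the Kloosterman sheaf, where $m-1$ is the rank of the relevant cohomology, and then transport it through the factor $q^{\sum n_i\binom{d_i}2}$.
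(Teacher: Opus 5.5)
Your reduction is the right one. It is the route this paper attributes to Wan; the paper itself only cites the theorem. Eichler's formula plus Hasse--Davenport give $G_B(\chi,\psi)=(-1)^{m-\sum d_i}q^{D}g_q(\chi)^m$, with $D=\sum_i n_i\binom{d_i}{2}$ and $m=\sum_i n_id_i$.

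One correction: for $t\in\F_q^\times$ one has $\Nr_B(t)=t^{m}$, not $t^{\sum d_i}$. This is exactly what makes the $t$-sums for $B$ and for $\F_q^m$ coincide, and it gives the exact identity
\[
\Nm_B(a,b)-\frac{|B^\times|}{q(q-1)}=(-1)^{m-\sum d_i}q^{D}\left(\Nm_{\F_q^m}(a,b)-\frac{(q-1)^{m-1}}{q}\right).
\]
So all the content lies in the split estimate $\bigl|\Nm_{\F_q^m}(a,b)-\frac{(q-1)^{m-1}}{q}-\frac{(-1)^m}{q}\bigr|\le (m-1)q^{(m-2)/2}$. Your sketch of that step does not work as written.

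\textbf{Where the sketch fails.} Your term-by-term evaluation is wrong in two places.
\begin{itemize}
\item For $a\neq0$, the trivial character contributes $-(-1)^{\sum d_i}q^{D}/(q(q-1))$, not the secondary term. It gives exactly the secondary term only when $a=0$.
\item The nontrivial characters, bounded termwise, give about $q^{D}q^{(m+1)/2}/q=q^{(N-1)/2}$. That is a factor $\sqrt q$ larger than $q^{(N-2)/2}$, not a constant multiple of it.
\end{itemize}
Applying Deligne's bound to each $\Kl_m(t^mb)$ in $\frac1q\sum_{t\ne0}\psi(-at)\Kl_m(t^mb)$ loses the same $\sqrt q$. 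So for $a\ne0$ you need square-root cancellation in the $t$-sum, and the secondary term is not attached to any single $\chi$.

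\textbf{The missing idea.} It is the argument of Theorem~\ref{main thm} specialized to $n=1$, $r=m$, $x=b$. Let $\cH=\phi_m^*\cK_m\otimes\cL_{\psi_{-a}}$.
\begin{itemize}
\item $H^2_c=0$.
\item $\dim H^1_c=\Swan_\infty(\cH)\le m$. So $m$, not $m-1$, is the dimension of the relevant cohomology.
\item The weights on $H^1_c$ are $\le m$.
\item The $I_0$-invariant line, on which Frobenius acts trivially, injects into $H^1_c$ (Lemma~\ref{engenvalue 1}). This gives a Frobenius eigenvalue exactly $1$.
\end{itemize}
That single weight-$0$ eigenvalue produces the term $(-1)^m/q$, hence $(-1)^{\sum d_i}q^{D}/q$ after transport. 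It also lowers the constant from $m$ to $m-1$. Equivalently, you can simply cite Theorem~\ref{main thm} with $n=1$ (or Moisio--Wan).

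\textbf{The case $a=0$.} Here your character method does close directly. The $t$-sum kills every $\chi$ with $\chi^m\ne1$. The character $\chi=1$ gives exactly the secondary term. The remaining $\gcd(m,q-1)-1\le m-1$ characters contribute at most $q^{(N-2)/2}$ each.

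\textbf{A caveat on the statement.} All of this needs $m\ge2$. For $B=\F_q$ and $a=b$, the stated inequality reads $1\le 0$.
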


For $x \in B^{\times}$, $a \in \F_q$ and $r\ge 2$, let 
\begin{equation}\label{nbrxa}
\Nm(B,r,x,a) = \#\{ (g_1,\cdots, g_r) \in (B^{\times})^r \mid g_1\cdots g_r = x,~\Tr_B(g_1+\cdots +g_r) =a\}.
\end{equation}
Wan proposed the following conjecture in \cite{WanNormTrace}. 

\begin{conjecture}[Wan, Conjecture 4.3 of \cite{WanNormTrace} ]\label{conj2} Let $B$ be a semisimple algebra of dimension $N$ over $\F_q$ as in \eqref{B}. For $a\in \F_q^{\times}$,
regular $x \in B^{\times}$ and integer $r\ge 2$, we have
\[
\left|\Nm(B,r,x,a) - \frac{|B^{\times}|^{r-1}}{q}\right| \le r^{\sum_{i=1}^s d_i n_i} q^{\frac{(r-1)N-1}{2}}.
\]
\end{conjecture}

Note that $n_1, \cdots, n_s$ are missing in the original form of Conjecture 4.3 of \cite{WanNormTrace}. Wan told the first author of this paper that this is a typo. We therefore use the corrected form above. We will give a counterexample in Section~\ref{counterexample} to show that this modification is necessary.

We first consider this conjecture when $B$ is a field. 
Let $k=\F_q,~E=\F_{q^n}$
and $r\ge 2,~x\in E^{\times},~a\in k^{\times}$. Put
\begin{equation}\label{eq:mxa-def}
        m_x(a)=\#\left\{(z_0,\cdots,z_{n-1})\in(\bar{k}^{\times})^n\mid
        z_{j}^r=x^{q^j},~r(z_0+\cdots+z_{n-1})=a\right\}.
\end{equation}
In particular, if $p=\operatorname{char}k$
divides $r$, then
\begin{equation}\label{eq:m-zero-p-divides-r}
        m_x(a)=0.
\end{equation}

\begin{theorem}\label{main thm}
For every prime power $q$, every $n\ge 1$, every $r\ge 2$, every
$a\in k^{\times}$, and every $x\in E^\times$, one has
\begin{equation}\label{eq:main}
\left|\Nm(E,r,x,a)-\frac{(q^n-1)^{r-1}}{q}-\frac{(-1)^r}{q}\right|\le(r^n-m_x(a)-1)q^{\frac{(r-1)n-1}{2}}.
\end{equation}
In particular
\begin{equation}\label{eq:main-basic}
\left|\Nm(E,r,x,a)-\frac{(q^n-1)^{r-1}}{q}-\frac{(-1)^r}{q}\right|\le (r^n-1) q^{\frac{(r-1)n-1}{2}}.
\end{equation}
\end{theorem}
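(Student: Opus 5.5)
We plan to expand the counting function in additive characters and reduce it to Kloosterman sums over $E$. Fix a nontrivial additive character $\psi$ of $k$ and put $\psi_E=\psi\circ\Tr$. Orthogonality gives
\[
\Nm(E,r,x,a)=\frac1q\sum_{t\in k}\psi(-ta)\sum_{\substack{x_1\cdots x_r=x}}\psi_E\bigl(t(x_1+\cdots+x_r)\bigr).
\]
The term $t=0$ contributes $(q^n-1)^{r-1}/q$. For $t\in k^\times$, substituting $x_i\mapsto t^{-1}x_i$ turns the inner sum into the hyper-Kloosterman sum $\Kl_r(E,t^rx)$ of $r$ variables over $E$. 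Hence
\[
\Nm(E,r,x,a)-\frac{(q^n-1)^{r-1}}{q}=\frac1q\sum_{t\in k^\times}\psi(-ta)\,\Kl_r(E,t^r x).
\]

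Next we interpret this geometrically. By Katz, $\Kl_r(E,y)=(-1)^{r-1}\Tr(\Frob_{y,E}\mid \Kl_r)$, where $\Kl_r$ is the lisse rank-$r$ Kloosterman sheaf on $\Gm/\F_q$, pure of weight $r-1$. The Frobenius of $E$ at $y$ acting on $\Kl_r$ equals the Frobenius of $k$ at $\Nm_{E/k}(y)$ acting on the tensor induction of $\Kl_r$, but we must keep track of $x$ itself and not only of its norm. So we consider the map $\phi_x:\Gm\to R_{E/k}\Gm$, $t\mapsto t^rx$, and the sheaf $\cG_x=\phi_x^*\TInd_{E/k}(\Kl_r)$ on $\Gm/k$. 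Concretely, its trace at $t\in k^\times$ is $\prod_j$ of the traces on the factors $\Kl_r$ pulled back along $t\mapsto t^r x^{q^j}$ over $\bar k$, which equals $(-1)^{n(r-1)}$ times $\Kl_r(E,t^rx)$. The sheaf $\cG_x$ is lisse of rank $r^n$ and pure of weight $n(r-1)$, tamely ramified at $0$ (unipotent local monodromy), and at $\infty$ it has all breaks $\le 1/r$ after pullback by $t^r$, hence all breaks $\le 1$. Then the sum above becomes $\sum_t \Tr(\Frob_t\mid \cG_x\otimes\cL_{\psi(-at)})$, and Grothendieck--Lefschetz applies. Since $\cL_{\psi(-at)}$ has break $1$ at $\infty$, $H^0_c=0$, and $H^2_c$ vanishes unless $\cG_x$ contains $\cL_{\psi(at)}$ geometrically. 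Euler--Poincaré (Grothendieck--Ogg--Shafarevich) gives $h^1_c=\Swan_\infty(\cG_x\otimes\cL_{\psi(-at)})$, which is at most $r^n$. By Deligne, $H^1_c$ is mixed of weights $\le n(r-1)+1$, so each eigenvalue has absolute value at most $q^{((r-1)n+1)/2}$. Dividing by $q$ then gives the target exponent $((r-1)n-1)/2$.

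The crucial, and hardest, step is to pin down the local monodromy of $\cG_x\otimes\cL_{\psi(-at)}$ at $\infty$ so as to get the sharp count $r^n-m_x(a)-1$ together with the $(-1)^r/q$ term. Over $\bar k$, pulling back along $t\mapsto t^r x^{q^j}$ makes $\Kl_r$ at $\infty$ look like $\bigoplus_{\zeta}\cL_{\psi(r z_j t)}$ times a tame character, where $z_j$ runs over the $r$-th roots of $x^{q^j}$. This is Fu--Wan's description of $\Kl_r$ at $\infty$ as an induction from $[r]$, valid when $p\nmid r$. The tensor product is then a sum of $r^n$ characters $\cL_{\psi(r(z_0+\cdots+z_{n-1})t)}\otimes(\text{tame})$. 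After twisting by $\cL_{\psi(-at)}$, exactly the $m_x(a)$ summands with $r\sum z_j=a$ become tame, which yields Swan conductor $r^n-m_x(a)$.

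One remaining eigenvalue should have small weight and account for $(-1)^r$. We would identify it via the constant or weight-drop part, namely the contribution of the tame unipotent invariants at $0$ to the image $H^1_c\to H^1$. Its eigenvalue $(-1)^{r}\cdot$(unit) would have to be computed directly, for instance by comparing with the exact value $\sum_{t\in k^\times}\psi(-ta)\cdot(\text{trivial part})$. We expect this weight and sign bookkeeping to be the main obstacle. When $p\mid r$ we have $m_x(a)=0$ by \eqref{eq:m-zero-p-divides-r}, and we would instead bound the Swan conductor crudely by $r^n$ using that all breaks are $<1$.
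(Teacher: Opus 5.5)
Your outline follows the paper's route: Fourier reduction to $\sum_{t\in k^\times}\psi(-at)\Kl_r(E,t^rx)$, tensor induction of the $x$-translate of $\cK$, pullback by $t\mapsto t^r$ and twist by $\cL_{\psi_{-a}}$, the Swan computation at $\infty$ from the $P_\infty$-decomposition of $[r]^*\cK$, then Grothendieck--Ogg--Shafarevich and Deligne. However, the two steps that actually produce the stated bound are missing.

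\textbf{First gap: $H_c^2$.} You never prove $H_c^2(\Gm_{\bar k},\cH)=0$ for $\cH=\cG_x\otimes\cL_{\psi_{-a}}$. You only note that it vanishes unless $\cL_{\psi_a}$ is a geometric quotient of $\cG_x$.

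When $p\mid r$, or $p\nmid r$ and $m_x(a)=0$, every break of $\cH$ at $\infty$ equals $1$, so the vanishing is automatic. For $p\mid r$ you still need that the purely inseparable pullback $t\mapsto t^{p^s}$ ($p^s$ the exact power of $p$ in $r$) does not change upper breaks, so that the breaks of $\cG_x$ are at most $p^{-s}<1$.

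When $p\nmid r$ and $m_x(a)>0$, however, $\cH$ has tame constituents at $\infty$, and no local argument excludes such a quotient. This is the heart of the paper, and it uses three inputs:
\begin{enumerate}
\item Katz's theorem that the geometric monodromy group of $\cK$ is connected with simple Lie algebra;
\item the lemma that $\cK_c$ (the pullback of $\cK$ by $T\mapsto cT$) is isomorphic to $\cK_1$, resp.\ $\cK_1^\vee$, up to a rank-one twist only if $c=1$, resp.\ $c=(-1)^r$;
\item the Goursat--Kolchin--Ribet criterion.
\end{enumerate}
Together these show that the geometric monodromy of $\bigotimes_j\cK_{x^{q^j}}$ is Zariski dense in a product of copies of $G_\geom$. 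This remains true on the finite-index subgroup coming from $t\mapsto t^r$. That product is a connected semisimple group with no nontrivial characters, so $\cL_{\psi_a}$ cannot occur as a quotient. Without this, $H_c^2$ could contribute eigenvalues of weight $n(r-1)+2$, which are too large for the bound, and your identity $h^1_c=\Swan_\infty$ would fail.

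\textbf{Second gap: the eigenvalue $1$.} The eigenvalue responsible for $(-1)^r/q$ and for the ``$-1$'' in $r^n-m_x(a)-1$ is only guessed at. The argument runs as follows.
\begin{enumerate}
\item The same global input gives $H^0(\Gm_{\bar k},\cH)=0$. Hence $\cH_{\bar\eta}^{I_0}\oplus\cH_{\bar\eta}^{I_\infty}$ injects Frobenius-equivariantly into $H_c^1$; it lands in the kernel of $H_c^1\to H^1$, not in the image.
\item $(\cK|_E)^{I_0}$ is a line on which $\Frob_{q^n}$ acts trivially. For $v$ spanning it, $v\otimes\cdots\otimes v$ in the tensor induction is $I_0$-invariant and fixed by $\Frob_q$, because Frobenius permutes the factors cyclically.
\item This survives the pullback by $t\mapsto t^r$ and the twist by $\cL_{\psi_{-a}}$, which is unramified at $0$.
\end{enumerate}
So $1$ itself is a $\Frob_q$-eigenvalue on $H_c^1$. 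The sign $(-1)^r$ comes from Katz's normalization $(-1)^{r-1}$ times the sign of $H_c^1$ in the Lefschetz formula.

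\textbf{Sign error in your local trace.} Because Frobenius permutes the tensor factors, the trace of $\Frob_{k,t}$ on the tensor induction is not a product of $n$ factor traces. It is the trace of $\Frob_{E,t^rx}$ on $\cK$, namely $(-1)^{r-1}\Kl_r(E,t^rx)$, not $(-1)^{n(r-1)}\Kl_r(E,t^rx)$. With your sign, the secondary term comes out wrong whenever $n$ and $r$ are both even.
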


\begin{remark}
The estimate \eqref{eq:main-basic} is a generalization of the main result in \cite{MoisioWan}. In fact, using the Davenport-Hasse relation, the authors reduced Theorem 1.2 of \cite{MoisioWan} to Theorem 2.2 of \cite{MoisioWan}. The notation $N(u)$ of \cite{MoisioWan} is exactly $\Nm(k,n+1,u,1)$ in our convention, and the estimate in Theorem 2.2 of \cite{MoisioWan} is exactly \eqref{eq:main-basic}.
\end{remark}

Sections~\ref{sec:fourier}-\ref{sec:cohomological-estimates} are devoted to proving Theorem~\ref{main thm}, and the proof will be finished in Proposition~\ref{prop:S-swan-estimate}. The proof starts with a Fourier reduction to a one-variable sum over $k^{\times}$ with each term being a twisted hyper-Kloosterman sum, which can be written as the $\Frob_E$-trace on Katz's Kloosterman sheaf. Through tensor induction we construct a new sheaf and descend the $\Frob_E$-trace to a $\Frob_k$-trace. This enables us to use the trace formula. The key step is to show the vanishing of $H_c^2$. The main ingredients are Katz's global monodromy theorem and the Goursat-Kolchin-Ribet criterion.

We generalize Theorem~\ref{main thm} to the semisimple case in Section~\ref{sec:ss-regular-sheaf}
and Section~\ref{sec:ss-cohomology-proof}. The notations used below will be explained at the beginning of Section~\ref{sec:ss-regular-sheaf}. Let $B$ be as in \eqref{B}, $r\ge2,~a\in k^{\times}$. We denote $M=\sum_{i=1}^s n_i d_i$. For a regular element
$x=(x_i)_i\in B^{\times}$, assume that the rational canonical form (see Proposition~\ref{prop:ss-primary-decomposition}) of each $x_i$ is
\[
x_i\sim\operatorname{diag}\left(J_{b_{i1}}(y_{i1}),\cdots,
 J_{b_{i t_i}}(y_{i t_i})\right),
\]
where $y_{i j}$ is regular elliptic of degree $a_{i j}$ over
$\F_{q^{n_i}}$, and put $f_{i j}=n_i a_{i j}$. We can also define a number $m_{B,x}(a)$ analogously to $ m_x(a)$, see the beginning of Section~\ref{sec:ss-cohomology-proof}.

\begin{theorem}\label{thm:ss-main}
Let $B$ be as in \eqref{B}, let $r\ge 2$, let
$a\in k^{\times}$, and let $x\in B^{\times}$ be regular. Then
\begin{equation}\label{eq:ss-main}
\left|\Nm(B,r,x,a)-\frac{|B^{\times}|^{r-1}}{q}
 +\frac{\varepsilon_B q^{A_B}}{q}\right|\le
\left(R_{B,x}-m_{B,x}(a)-1\right)q^{\frac{(r-1)N-1}{2}},
\end{equation}
where
\[
R_{B,x}=\prod_{1\le i\le s,1\le j\le t_i}\binom{b_{i j}+r-1}{b_{i j}}^{f_{i j}},\quad A_B=\frac{(r-1)(N-M)}{2},\quad \varepsilon_B=(-1)^{(r-1)\sum_{i=1}^s d_i}.
\]
\end{theorem}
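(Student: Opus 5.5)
We follow the same route as for Theorem~\ref{main thm}, now with Zelingher's formula in place of the Davenport--Hasse factorization.

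\textbf{Step 1: Fourier reduction.} Fix a nontrivial additive character $\psi$ of $k$. Then
\[
\Nm(B,r,x,a)=\frac1q\sum_{t\in k}\psi(-ta)\,K_r(t;x),\qquad
K_r(t;x)=\sum_{g_1\cdots g_r=x}\psi\bigl(t\Tr_B(g_1+\cdots+g_r)\bigr).
\]
The term $t=0$ gives $|B^\times|^{r-1}/q$. For $t\ne0$ the sum $K_r(t;x)$ factors over the simple components $M_{d_i}(\F_{q^{n_i}})$, and each factor is a matrix Kloosterman sum attached to the regular element $t^{-r}x_i$ (after rescaling each $g_j$ by $t^{-1}$). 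Since $x_i$ is regular with the given rational canonical form, Zelingher's formula writes each factor as
\[
(-1)^{(r-1)d_i}\,q^{n_i(r-1)\binom{d_i}{2}}
\]
times a product over $j$ of Frobenius traces of $\Frob_{\F_{q^{f_{ij}}}}$ on $\Sym^{b_{ij}}\Kl_r$, evaluated at the point determined by $t^{-r}y_{ij}$. Multiplying over $i$ produces the sign $\varepsilon_B$ and the power $q^{A_B}$. Here I would check that $\sum_i n_i(r-1)\binom{d_i}{2}$ plus the Tate twists hidden in $\Sym^{b}$ account exactly for $A_B=(r-1)(N-M)/2$.

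\textbf{Step 2: descent to a sheaf over $k$.} As in the field case, for each pair $(i,j)$ form the pullback of $\Sym^{b_{ij}}\Kl_r$ along $t\mapsto t^{-r}y_{ij}$ over $\F_{q^{f_{ij}}}$, viewed on $\Gm$, and take its tensor induction from $\F_{q^{f_{ij}}}$ down to $k$. Tensoring these over all $(i,j)$ gives a lisse sheaf $\cG$ on $\Gm/k$. It has rank $\prod\binom{b_{ij}+r-1}{b_{ij}}^{f_{ij}}=R_{B,x}$, it is pure of weight $(r-1)M$ after normalization, it is tame at $0$ and has all breaks $\le 1/r$ at $\infty$. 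Then
\[
\sum_{t\ne0}\psi(-ta)K_r(t;x)=\varepsilon_B q^{A_B}\sum_{t\in k^\times}\Tr(\Frob_t\mid\cG\otimes\cL_{\psi(-at)}).
\]
Apply the Grothendieck--Lefschetz trace formula to this last sum. The Euler characteristic is $-\Swan_\infty(\cG\otimes\cL_\psi)=-R_{B,x}$, since $\cL_\psi$ has slope $1$, which exceeds every slope of $\cG$. Hence $h^1-h^2=R_{B,x}$ provided $H^0_c=0$, which holds because $\Gm$ is affine.

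\textbf{Step 3: control of $H^2_c$ and weights.} Showing $H^2_c=0$ should go as before. Katz's monodromy theorem for $\Kl_r$ (geometric monodromy $SL_r$ or $Sp_r$) combined with the Goursat--Kolchin--Ribet criterion makes the geometric monodromy of the tensor-induced pieces a full product, so $\cG$ is geometrically irreducible and nontrivial, and it cannot become $\cL_{\bar\psi}$ after a twist because the slopes differ. However, in the field case the constant $(-1)^r/q$ came from a $t=0$ type correction, namely the $-1$ inside the count $\sum_{t\in k}$ versus $k^\times$. Here that correction is the $\varepsilon_Bq^{A_B}/q$ term: it arises from completing the sum over $t\in k^\times$ to $t\in k$, or from a one-dimensional weight-drop piece of $H^1_c$ coming from the tame part at $0$. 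I expect the pieces of $H^1_c$ that are not pure of top weight to have dimension $m_{B,x}(a)+1$. Of these, one accounts for the explicit main term and $m_{B,x}(a)$ carry weight at most $(r-1)N-2$ after twisting. These come from the unipotent-at-$0$ / fixed-point contributions counted by $m_{B,x}(a)$, namely the solutions of $z^r=\dots$ with $r\sum z=a$, analogous to $m_x(a)$. The remaining $R_{B,x}-m_{B,x}(a)-1$ classes have weight $\le (r-1)N-1$ after multiplying by $q^{A_B}$, and dividing by $q$ gives the bound.

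\textbf{Main obstacle.} The hardest part is the precise weight bookkeeping in Step 3: identifying exactly which eigenvalues of $H^1_c$ drop weight, proving that there are $m_{B,x}(a)+1$ of them, and matching them with the correction term. The Jordan blocks with $b_{ij}>1$ make the symmetric powers interact with the local monodromy at $0$, so I would first treat the case where all $b_{ij}=1$ (the \'etale-like case) and then reduce the general case to it via the local structure of $\Sym^b$ at $0$. In addition, the Goursat--Kolchin--Ribet step must be verified: this requires ensuring that distinct $(i,j)$ do not give geometrically isomorphic Kummer-twisted pullbacks, or, if they do, showing that this does not affect the vanishing of $H^2_c$.
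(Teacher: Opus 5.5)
Your proposal has two genuine gaps: the dimension of $H^1_c$ is wrong, and the eigenvalue $1$ behind the secondary main term is only expected, not proved.

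\textbf{The Swan conductor and $H^2_c$.} The decisive error is in your Step~2. The sheaf whose traces you sum is the pullback under the $r$-th power map $\phi_r$ of $\mathcal M=\bigotimes_\lambda\TInd_{L_\lambda/k}([\xi_\lambda]^*\Sym^{b_\lambda}\cK)$. When $p\nmid r$, this tame pullback multiplies the breaks at $\infty$ by $r$. So they are $\le 1$, not $\le 1/r$, and the slope $1$ of $\cL_{\psi_{-a}}$ need not dominate.

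Concretely, apply Katz's local description $\phi_r^*\cK|_{P_\infty}\cong\bigoplus_{z^r=1}\cL_{\psi_{rz}}$ factor by factor, taking $\Sym^{b}$ of a sum of characters. This splits $\phi_r^*\mathcal M|_{P_\infty}$ into $R_{B,x}$ characters $\cL_{\psi_c}$ with $c=r\sum m_{\lambda,h,z}z$. After twisting by $\cL_{\psi_{-a}}$, exactly $m_{B,x}(a)$ of them become tame. Hence $\Swan_\infty=R_{B,x}-m_{B,x}(a)$ and, once $H^2_c=0$, $\dim H^1_c=R_{B,x}-m_{B,x}(a)$. Your value $R_{B,x}$ is correct only when $p\mid r$, where $m_{B,x}(a)=0$.

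So $m_{B,x}(a)$ measures a drop in dimension, not a packet of lower-weight eigenvalues. The mechanism you sketch in Step~3 could not give the stated bound anyway: $m_{B,x}(a)$ extra classes of weight $\le (r-1)N-2$ would still contribute to the error.

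The same false slope claim underlies your $H^2_c$ argument (``the slopes differ''). Geometric irreducibility of $\phi_r^*\mathcal M$ also fails in general: coinciding conjugates $\xi_\lambda^{q^h}$ give tensor powers, and $\Sym^b$ of the standard representation of $\mathrm{SO}_r$ or $G_2$ is reducible. What is actually needed is weaker. By Lemma~\ref{lem:twists-duals} and Goursat--Kolchin--Ribet applied to representatives of the classes $c\sim c'\Leftrightarrow c'\in\{c,(-1)^rc\}$, the Zariski closure of the geometric monodromy is a product of copies of the connected semisimple group $G_\geom$. This remains true on the finite-index subgroup coming from $\phi_r$. Such a group has no nontrivial characters, so no rank-one subquotient can be $\cL_{\psi_a}$.

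\textbf{The eigenvalue $1$.} The term $\varepsilon_Bq^{A_B}/q$ is not a completion effect from $t\in k^\times$ versus $t\in k$; the $t=0$ term is exactly $|B^\times|^{r-1}/q$. It is a single Frobenius eigenvalue equal to $1$ in $H^1_c$, and you only state its existence as an expectation. The paper proves it as follows, writing $\cH=\cH_{B,x,a}$.
\begin{enumerate}
\item Since $H^0({\Gm}_{\bar k},\cH)=0$, the sequence $0\to j_!\cH\to j_*\cH\to j_*\cH/j_!\cH\to 0$ on $\Pj^1$ gives a Frobenius-equivariant injection $\cH^{I_0}\oplus\cH^{I_\infty}\hookrightarrow H^1_c$.
\item The inertia $I_0$ acts on $\cK$ unipotently with a single Jordan block, and Frobenius fixes the line $\cK^{I_0}$.
\item If $v_\lambda$ spans that line, then $\bigotimes_\lambda (v_\lambda^{b_\lambda})^{\otimes f_\lambda}$ is an $I_0$-invariant vector of $\mathcal M$ fixed by $\Frob_q$, by the cyclic-permutation formula for tensor induction. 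This persists in $\cH$ because $\cL_{\psi_{-a}}$ is unramified at $0$.
\end{enumerate}
Remove this eigenvalue from the $R_{B,x}-m_{B,x}(a)$ eigenvalues, bound the rest by Deligne's weight $\le (r-1)M+1$, and multiply by $q^{A_B}$; this gives exactly the theorem.

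Two minor slips. The rescaling yields $t^rx$, not $t^{-r}x$; the latter would put the wild point at $0$. No Tate twists are hidden in $\Sym^b$, since $\sum_i n_i(r-1)\binom{d_i}{2}=A_B$ on the nose.
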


\begin{remark}
(a)  When $B=\F_{q^n}$, estimate \eqref{eq:ss-main} specializes exactly to \eqref{eq:main}. See Remark~\ref{rem:ss-field-specialization}. Thus, Theorem~\ref{thm:ss-main} recovers Theorem~\ref{main thm}.\\
(b) One can check $R_{B,x}\le r^M =r^{\sum_{i=1}^s n_i d_i}$. In particular, Conjecture~\ref{conj2} can be deduced from Theorem~\ref{thm:ss-main}.
\end{remark}

The proof of Theorem~\ref{thm:ss-main} will be finished in Proposition~\ref{prop:ss-trace-estimate}. Compared with Theorem~\ref{main thm}, the new ingredient of the proof is Zelingher's formula for
regular matrix Kloosterman sums, which expresses each simple-factor sum in
terms of Frobenius traces on symmetric powers of the Kloosterman sheaf.

Throughout the paper, $\ell$ is a prime different
from $p$, and a fixed embedding $\Ql\hookrightarrow\mathbb{C}$ is used to take
complex absolute values of $\ell$-adic trace sums.  We use geometric Frobenius.

\section{Fourier reduction}\label{sec:fourier}

Let $k=\F_q,~E=\F_{q^n}$. Fix a non-trivial additive character
\[
\psi:k\longrightarrow \Ql^{\times}
\]
and put $\Psi=\psi\circ\Tr_{E/k}$. For $y\in E^{\times}$, define the $r$-variable hyper-Kloosterman sum
\begin{equation}\label{eq:Kl-def}
    \Kl_{r,E,\psi}(y)=\sum_{u_1,\cdots,u_{r-1}\in E^{\times}}\Psi\left(u_1+\cdots+u_{r-1}+\frac{y}{u_1\cdots u_{r-1}}\right).
\end{equation}

\begin{lemma}\label{lem:fourier}
For $a\in k^{\times}$ and $x\in E^{\times}$, one has
\begin{equation}\label{eq:fourier-N}
    \Nm(E,r,x,a)=\frac{(q^n-1)^{r-1}}{q}+\frac{1}{q} S(E,r,x,a),
\end{equation}
where
\begin{equation}\label{eq:S-def}
    S(E,r,x,a)=\sum_{t\in k^{\times}}\psi(-at) \Kl_{r,E,\psi}(t^r x).
\end{equation}
\end{lemma}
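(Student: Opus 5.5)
Detect the trace condition with additive characters of $k$. Writing $\Tr=\Tr_{E/k}$, orthogonality of $\psi$ on $k$ gives
\[
\Nm(E,r,x,a)=\frac1q\sum_{t\in k}\ \sum_{\substack{x_1,\dots,x_r\in E^{\times}\\ x_1\cdots x_r=x}}\psi\bigl(t(\Tr(x_1+\cdots+x_r)-a)\bigr).
\]
We split off the term $t=0$. It contributes $\frac1q\#\{(x_1,\dots,x_r)\in (E^\times)^r: x_1\cdots x_r=x\}=\frac{(q^n-1)^{r-1}}{q}$, since $x_1,\dots,x_{r-1}$ are free and $x_r$ is then determined. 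This is the main term in \eqref{eq:fourier-N}.

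For $t\in k^{\times}$, use that $t\Tr(y)=\Tr(ty)$ because $t\in k$. The inner sum becomes $\psi(-at)\sum\Psi(tx_1+\cdots+tx_r)$. Substitute $u_i=tx_i$ for $1\le i\le r-1$. Since $t\neq 0$, this is a bijection of $(E^{\times})^{r-1}$. Then $x_r=x/(x_1\cdots x_{r-1})$ gives
\[
tx_r=\frac{t\,x}{x_1\cdots x_{r-1}}=\frac{t^{r}x}{u_1\cdots u_{r-1}}.
\]
So the inner sum equals $\sum_{u}\Psi\bigl(u_1+\cdots+u_{r-1}+t^r x/(u_1\cdots u_{r-1})\bigr)=\Kl_{r,E,\psi}(t^r x)$, as defined in \eqref{eq:Kl-def}.

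Summing over $t\in k^{\times}$ with the weight $\psi(-at)$ gives exactly $\frac1q S(E,r,x,a)$, which is \eqref{eq:S-def}. This completes the identity.

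There is no real obstacle here; the only point needing care is the change of variables. The factor $t^r$ arises because $x_r$ is absorbed into the multiplicative constraint, so it picks up $t\cdot t^{r-1}$.
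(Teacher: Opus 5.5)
Your proof is correct and follows essentially the same route as the paper. You detect the trace condition by additive orthogonality on $k$, read off the main term $(q^n-1)^{r-1}/q$ from $t=0$, and for $t\neq 0$ rescale $u_i=tx_i$ so that $tx_r=t^r x/(u_1\cdots u_{r-1})$, which turns the inner sum into $\Kl_{r,E,\psi}(t^r x)$.
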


\begin{proof}
For $z\in E$, orthogonality relation of additive characters gives
\[
    1_{\{z\in E \,\mid\, \Tr_{E/k}(z)=a\}}
    =\frac{1}{q}\sum_{t\in k}\psi\left(t\left(\Tr_{E/k}(z)-a\right)\right)=\frac{1}{q}\sum_{t\in k}\psi(-at)\Psi(tz).
\]
Apply this identity to
\[
z=x_1+\cdots+x_{r-1}+\frac{x}{x_1\cdots x_{r-1}}
\]
and sum over $(x_1,x_2,\cdots,x_{r-1})\in(E^{\times})^{r-1}$.
The term $t=0$ contributes to $(q^n-1)^{r-1}/q$. For $t\neq 0$, the change of
variables $u_i=tx_i$ is a bijection of $(E^{\times})^{r-1}$ and gives
\[
    t\left(x_1+\cdots+x_{r-1}+\frac{x}{x_1\cdots x_{r-1}}\right)=u_1+\cdots+u_{r-1}+\frac{t^r x}{u_1\cdots u_{r-1}}.
\]
This gives \eqref{eq:fourier-N}.
\end{proof}

It remains to estimate $S(E,r,x,a)$.

\section{Tensor induction of Kloosterman sheaves}\label{sec:tensor-induction}

The purpose of this section is to construct a lisse $\ell$-adic sheaf on ${\Gm}_k$ whose local trace gives $S(E,r,x,a)$.

Let $\cK=\cK_{r,\psi}$ be Katz's rank $r$ Kloosterman sheaf; see \cite[Theorem~4.1.1]{KatzGKM}. It is a lisse $\ell$-adic sheaf on ${\Gm}_k$, pure of weight $r-1$, tame at $0$ and totally wild at $\infty$ with all upper breaks equal to $\frac{1}{r}$, satisfying that for any finite extension $L/k$ and any $y\in L^{\times}$, one has
\begin{equation}\label{eq:Kl-sheaf-trace}
    \operatorname{tr}(\Frob_{L,y}\mid \cK_{\bar y})=(-1)^{r-1}\Kl_{r,L,\psi}(y),
\end{equation}
where $\Frob_{L,y}$ is the geometric Frobenius of $\Gal(\bar{k}/L)$ at $y$ and $\bar{y}$ is a geometric point over $y$. 

Fix $x\in E^{\times}$. Put
\[
    f_x:{\Gm}_E\longrightarrow{\Gm}_E,
        \quad T\longmapsto xT
\]
and 
\[
    \cF_x=f_x^*(\cK|_E).
\]
Recall that after fixing a geometric generic point $\bar{\eta}$, a lisse $\ell$-adic sheaf of finite rank over $k$ is equivalent to a finite dimensional continuous $\ell$-adic representation of $\pi_1({\Gm}_k,\bar{\eta})$ by taking fiber at $\bar{\eta}$. Let $V = (\cF_x)_{\bar{\eta}}$ be the corresponding $\ell$-adic representation of $\pi_1({\Gm}_k,\bar{\eta})$ and 
\[
W = \TInd_{E/k}(V)
\] 
be the tensor induction of $V$ from $\pi_1({\Gm}_E,\bar{\eta})$ to $\pi_1({\Gm}_k,\bar{\eta})$. Concretely, choose a lift $F \in \pi_1({\Gm}_k,\bar{\eta})$ of the geometric Frobenius of $\Gal(E/k)$. Take $1, F, F^{2}, \cdots, F^{n-1}$ as coset representatives of $\pi_1({\Gm}_E,\bar{\eta})$ in $\pi_1({\Gm}_k,\bar{\eta})$. As a representation of $\pi_1({\Gm}_E,\bar{\eta})$,
\begin{equation}\label{tensor representation}
    W = V \otimes V^F \otimes \cdots \otimes V^{F^{n-1}}
\end{equation}
where each $V^{F^j}$ is equal to $V$ as vector space with the action of $\sigma\in\pi_1({\Gm}_E,\bar{\eta})$ given by $F^{-j} \sigma F^j$ on $V$. The element $F$ acts on $W$ by
\begin{equation}\label{F permute}
    F(v_0\otimes v_1\otimes\cdots\otimes v_{n-1})=F^n(v_{n-1})\otimes v_0\otimes v_1\otimes\cdots\otimes v_{n-2}.
\end{equation}
This makes $W$ a representation of $\pi_1({\Gm}_k,\bar{\eta})$, which up to isomorphism does not depend on the choice of $F$. By \eqref{F permute} we have
\begin{equation}\label{trace of tensor product}
    \operatorname{tr}(F\mid W)=\operatorname{tr}(F^n\mid V)
\end{equation}
because the only contribution is given by elements like $v\otimes v\otimes \cdots \otimes v$. Let 
\[
\cG_x=\TInd_{E/k}(\cF_x)
\]
be the sheaf on ${\Gm}_k$ such that $(\cG_x)_{\bar{\eta}}=W$.
  
\begin{lemma}\label{tensor induction decomposition}
As a lisse $\ell$-adic sheaf on ${\Gm}_E$, one has
\begin{equation}\label{eq:Gx-geometric}
    \cG_x|_{E}\cong
        \bigotimes_{j=0}^{n-1}\cF_{x^{q^j}}.
\end{equation}
\end{lemma}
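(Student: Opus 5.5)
Restricted to $\pi_1({\Gm}_E,\bar\eta)$, the representation $W$ is by \eqref{tensor representation} the tensor product $V\otimes V^F\otimes\cdots\otimes V^{F^{n-1}}$. The plan is therefore to identify each conjugated representation $V^{F^j}$, as a lisse sheaf on ${\Gm}_E$, with $\cF_{x^{q^j}}$, and then take the tensor product over $j$.

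\textbf{Step 1.} We interpret conjugation by $F^j$ as pullback along a Frobenius twist. The lisse sheaf on ${\Gm}_E$ attached to $V^{F^j}$ is $(\sigma^j)^*\cF_x$. Here $\sigma$ is the automorphism of the $k$-scheme ${\Gm}_E={\Gm}_k\times_k E$ induced by the arithmetic (or geometric, depending on convention) Frobenius of $\Gal(E/k)$ acting on the second factor. Equivalently, it is the Galois conjugate sheaf $\cF_x^{(q^j)}$ obtained by base change along $\operatorname{Spec}E\to\operatorname{Spec}E$, $\lambda\mapsto\lambda^{q^{\pm j}}$. This is the standard dictionary: an element $F\in\pi_1({\Gm}_k)$ mapping to Frobenius in $\Gal(E/k)$ normalizes $\pi_1({\Gm}_E)$, and conjugation by it realizes the outer action coming from the $k$-automorphism of ${\Gm}_E$. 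We will check this on Frobenius traces to fix the sign of the exponent. In the end only the set $\{x^{q^j}\}_{0\le j\le n-1}$ matters, so the direction is harmless.

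\textbf{Step 2.} We compute the twist. The sheaf $\cK|_E$ is the base change of a sheaf defined over $k$, so it is invariant under the Galois twist: $(\cK|_E)^{(q^j)}\cong\cK|_E$. The map $f_x\colon T\mapsto xT$ twists to $T\mapsto x^{q^j}T$, because twisting a morphism defined over $E$ applies Frobenius to its coefficients. Hence
\[
(f_x^*\cK|_E)^{(q^j)}\cong f_{x^{q^j}}^*\cK|_E=\cF_{x^{q^j}}.
\]

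\textbf{Step 3.} We tensor over $j$ to obtain \eqref{eq:Gx-geometric}. As a consistency check, take a point $y\in E'^{\times}$ for a finite extension $E'/E$ and compare Frobenius traces. The $j$-th factor of $W$ at $\Frob_{E',y}$ has trace computed at $y^{q^{\pm j}}$ on $\cF_x$. Since $\cK$ is defined over $k$, this equals $(-1)^{r-1}\Kl_{r,E',\psi}(xy^{q^{\pm j}})$. Applying the Frobenius power $q^{\mp j}$, which preserves Kloosterman sums over $E'$ because $\Psi$ factors through $\Tr_{E'/k}$, turns this into $\Kl_{r,E',\psi}(x^{q^{\mp j}}y)$. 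So the traces agree. Chebotarev together with semisimplification would only give an isomorphism of semisimplifications, so the structural argument of Steps 1 and 2 is the one that yields the genuine isomorphism.

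\textbf{Main obstacle.} The delicate point is Step 1. We must make precise that $V^{F^j}$, with the action $\sigma\mapsto F^{-j}\sigma F^j$, corresponds to the pullback by the $k$-automorphism $\sigma^j$ of ${\Gm}_E$, keeping consistent track of geometric versus arithmetic Frobenius. I would handle this by viewing $\pi_1({\Gm}_E,\bar\eta)\subset\pi_1({\Gm}_k,\bar\eta)$ as the stabilizer of the $E$-structure, and noting that the automorphism $\sigma$ lifts to the inner automorphism by $F$ on the universal cover.
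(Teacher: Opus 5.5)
Your proposal is correct and follows essentially the paper's argument. You identify $V^{F^j}$ with a Frobenius pullback of $\cF_x$, move that Frobenius past $f_x$ so that $x$ is replaced by a conjugate $x^{q^{\mp j}}$, and use that $\cK$ is defined over $k$ to absorb the Frobenius on $\cK|_E$. The only cosmetic difference is that the paper pulls back along the $q$-power Frobenius morphism $F_q^j$ of ${\Gm}_E$, using $f_x\circ F_q^j=F_q^j\circ f_{x^{q^{-j}}}$, rather than along your deck transformation $\sigma^j$ of ${\Gm}_E/{\Gm}_k$. Since the composite of the two is the absolute Frobenius, which acts trivially on \'etale sheaves, the two identifications agree up to the harmless sign $j\leftrightarrow -j$ you already anticipated.
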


\begin{proof}
    Let $F_q$ be the $q$-th geometric Frobenius morphism on ${\Gm}_E$. Then for $0\le j\le n-1$, the sheaf corresponding to $V^{F^j}$ is exactly  $(F_{q}^j)^*\cF_x$. Moreover, we have
    \[
    (F_{q}^j)^*\cF_x\cong(f_x\circ F_{q}^j)^*\cK|_E\cong(F_{q}^j\circ f_{x^{q^{-j}}})^*\cK|_E\cong f_{x^{q^{-j}}}^*(F_{q}^j)^*\cK|_E\cong \cF_{x^{q^{-j}}}.
    \]
    The last isomorphism holds because $\cK$ is actually defined over ${\Gm}_k$. The result follows by \eqref{tensor representation}.
\end{proof}

\begin{lemma}\label{lem:Gx-basic}
The sheaf $\cG_x$ is lisse with rank $r^n$, pure of weight $n(r-1)$ and tame at $0$. Moreover, for every $t\in k^{\times}$, one has
\begin{equation}\label{eq:Gx-trace}
    \operatorname{tr}\left(\Frob_{k,t}\mid(\cG_x)_{\bar{t}}\right)=(-1)^{r-1} \Kl_{r,E,\psi}(tx).
\end{equation}
\end{lemma}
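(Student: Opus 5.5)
The basic properties of $\cG_x$ can be read off from its restriction to ${\Gm}_E$, which is described by Lemma~\ref{tensor induction decomposition}. Lisseness, rank, purity and tameness at $0$ are all geometric statements, or statements that are insensitive to finite extension of the base field. So we may check them on $\cG_x|_E\cong\bigotimes_{j=0}^{n-1}\cF_{x^{q^j}}$.

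Each factor $\cF_{x^{q^j}}=f_{x^{q^j}}^*(\cK|_E)$ is the pullback of $\cK$ under an automorphism of ${\Gm}_E$ that fixes $0$ and $\infty$. Hence each factor is lisse of rank $r$, pure of weight $r-1$, and tame at $0$, by the properties of $\cK$ recalled from \cite[Theorem~4.1.1]{KatzGKM}. A tensor product of $n$ such sheaves is therefore lisse of rank $r^n$ and pure of weight $n(r-1)$. It is also tame at $0$: the wild inertia $P_0$ acts trivially on every factor, hence trivially on the tensor product. Purity descends from $E$ to $k$, because the Frobenius eigenvalues of $\Frob_{E,y}$ are the $n$-th powers of those of $\Frob_{k,y}$ at points of the appropriate degree. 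Alternatively, purity follows directly from \eqref{F permute}: the $n$-th power of $F$ acts on the tensor product of pure pieces.

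For the trace formula, fix $t\in k^{\times}$ and let $F=\Frob_{k,t}$, which is a Frobenius element in the decomposition group at $t$. Its image in $\Gal(E/k)$ is the geometric Frobenius, so $F$ is a legitimate choice of lift in the construction of the tensor induction. Since the isomorphism class of $W$ does not depend on this choice, we may use it. By \eqref{trace of tensor product},
\[
\operatorname{tr}(\Frob_{k,t}\mid \cG_{x,\bar t})=\operatorname{tr}(F^n\mid V),
\]
and $F^n$ is the Frobenius $\Frob_{E,t}$ of the point $t$, viewed as an $E$-rational point of ${\Gm}_E$. Acting on $V=(\cF_x)_{\bar t}=(\cK|_E)_{\overline{xt}}$, the element $\Frob_{E,t}$ corresponds to $\Frob_{E,xt}$ on $\cK$. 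So \eqref{eq:Kl-sheaf-trace} gives $(-1)^{r-1}\Kl_{r,E,\psi}(tx)$.

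The main point needing care is the compatibility between the generic-point description and stalks. The formula \eqref{trace of tensor product} is proved for any lift $F$ of Frobenius acting on the $\pi_1({\Gm}_k)$-representation $W$. To apply it to $\Frob_{k,t}$, we transport the stalk at $\bar t$ to $\bar\eta$ by a specialization isomorphism, which is available because the sheaf is lisse. This transport identifies the decomposition group at $t$ with a subgroup of $\pi_1$ up to conjugacy. The trace is conjugation invariant, and the construction of $W$ is independent of the chosen lift up to isomorphism. Together these make the computation legitimate.
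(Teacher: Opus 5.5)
Your proposal is correct and follows essentially the same route as the paper: rank, purity and tameness are read off from $\cG_x|_E\cong\bigotimes_{j}\cF_{x^{q^j}}$, and the trace is computed by using a lift of $\Frob_{k,t}$ from the decomposition group at $t$ as the Frobenius lift in \eqref{trace of tensor product}, so that its $n$-th power acts on $V$ as $\Frob_{E,tx}$ on $\cK$. The extra justifications you supply (independence of the chosen lift, descent of purity from $E$ to $k$) are points the paper leaves implicit. The only imprecision is that, for a closed point of degree $d$, the relevant power in the purity descent is $\operatorname{lcm}(d,n)/d$ rather than $n$, which does not affect the argument.
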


\begin{proof}
The assertions about rank, purity and tameness follow from \eqref{eq:Gx-geometric}. For the local trace, let $D_t$ (resp. $I_t$ ) be the decomposition group (resp. inertia group) at the closed point $t$. Choose a lift of $\Frob_{k,t}$ in $\pi_1({\Gm}_k,\bar{\eta})$, say $\widetilde{\Frob_{k,t}}$, via
\[
\Gal(\bar{k}/k)\cong D_t/I_t\twoheadleftarrow D_t\hookrightarrow\pi_1({\Gm}_k,\bar{\eta}).
\]
Then $\widetilde{\Frob_{k,t}}$ is also a lift of the geometric Frobenius of $\Gal(E/k)$ and by \eqref{trace of tensor product}, one has
\[
\operatorname{tr}\left(\Frob_{k,t}\mid(\cG_x)_{\bar{t}}\right)= \operatorname{tr}(\widetilde{\Frob_{k,t}}^n\mid V)=\operatorname{tr}\left(\Frob_{k,t}^n\mid (\cF_x)_{\bar{t}}\right)=\operatorname{tr}(\Frob_{E,tx}\mid \cK_{\bar{tx}}).
\]
The result follows by \eqref{eq:Kl-sheaf-trace}.
\end{proof}

Let $\phi_r$ be the morphism on ${\Gm}_k$ given by the $r$-th power $T\mapsto T^r$. For $a\in k^{\times}$, put
\begin{equation}\label{eq:H-def}
    \cH_{a,x}=\phi_r^*\cG_x\otimes\cL_{\psi_{-a}},
\end{equation}
where $\cL_{\psi_{-a}}$ is the Artin-Schreier sheaf associated with the additive character $t\mapsto\psi(-at)$. By Lemma~\ref{lem:Gx-basic}, the sheaf $\cH_{a,x}$ is lisse on ${\Gm}_k$ with rank $r^n$, pure of weight $n(r-1)$ and tame at $0$. Combining \eqref{eq:Gx-trace} and \eqref{eq:S-def} with Grothendieck-Lefschetz trace formula, we obtain
\begin{equation}\label{eq:GL trace formula}
S(E,r,x,a)= (-1)^{r-1}\sum_{t\in k^{\times}}\operatorname{tr}\left(\Frob_{k,t}\mid(\cH_{a,x})_{\bar{t}}\right)=\sum_{i=1}^2 (-1)^{i+r-1}\operatorname{tr}\left(\Frob_q\mid H_{c}^i({\Gm}_{\bar{k}},\cH_{a,x})\right).
\end{equation}
Note that $H_{c}^0$ vanishes because ${\Gm}_{\bar{k}}$ is not proper.

\section{The Swan conductor at infinity}\label{sec:swan}

We compute the Swan conductor of $\cH_{a,x}$ at $\infty$.

\begin{lemma}\label{lem:pure-insep}
Let $L/K$ be a finite purely inseparable extension of complete discretely valued fields of characteristic $p$ with perfect residue field. Fix a common algebraic closure and put $L^{\mathrm{sep}}=LK^{\mathrm{sep}}$. Then the restriction from $L^{\mathrm{sep}}$ to $K^{\mathrm{sep}}$ induces a canonical isomorphism
\[
\Gal(L^{\mathrm{sep}}/L)\xrightarrow{\sim}\Gal(K^{\mathrm{sep}}/K).
\]
Under this isomorphism, the lower numbering ramification filtrations and the upper numbering ramification filtrations are identified.  
\end{lemma}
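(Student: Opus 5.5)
The plan is to prove first that restriction is an isomorphism of groups, and then that it matches the lower numbering filtrations. The upper numbering statement then follows formally, because the Herbrand function is built from the lower filtration alone.

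\emph{Step 1: restriction is an isomorphism.} Since $L/K$ is purely inseparable, $L=K^{1/p^m}\cap L$ and $L\subseteq K^{p^{-m}}$ for some $m$. Consequently $K^{\mathrm{sep}}$ and $L$ are linearly disjoint over $K$, and $L^{\mathrm{sep}}=LK^{\mathrm{sep}}$ is a separable closure of $L$. Indeed, any separable extension of $L$ is generated by roots of separable polynomials. Raising coefficients to the $p^m$-th power moves these into $K$, and separability and root-adjunction behave well under this, so every finite separable $L'/L$ is $L\cdot K'$ with $K'=L'^{p^m}$ separable over $K$. Restriction $\sigma\mapsto\sigma|_{K^{\mathrm{sep}}}$ is then injective, because $L^{\mathrm{sep}}$ is generated by $L$ and $K^{\mathrm{sep}}$ and $\sigma$ fixes $L$. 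It is surjective by linear disjointness: an automorphism of $K^{\mathrm{sep}}$ over $K$ extends uniquely to $L\otimes_K K^{\mathrm{sep}}=LK^{\mathrm{sep}}$ fixing $L$. Equivalently, the Frobenius-type map $y\mapsto y^{p^m}$ identifies $L^{\mathrm{sep}}$ with a subfield of $K^{\mathrm{sep}}$ compatibly with Galois actions.

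\emph{Step 2: matching the lower filtrations.} I would reduce to finite levels. Let $K'/K$ be finite Galois and $L'=LK'$. Then $\Gal(L'/L)\cong\Gal(K'/K)$ by Step 1. Since the residue fields are perfect, $L/K$ is a purely inseparable extension of complete fields with perfect residue field. Hence $L/K$ is totally ramified with ramification index $e=[L:K]$, and after the isomorphism $x\mapsto x^{p^m}$ it is simply the map $L\to L^{p^m}\supseteq K$.

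The cleanest approach is to use the Frobenius isomorphism $\phi:L'\to L'^{p^m}$, $y\mapsto y^{p^m}$. This is a field isomorphism commuting with all Galois automorphisms, and it rescales valuations by the constant $p^m$. I would then write $i_{G}(\sigma)=\min_y v(\sigma y-y)$ over $y$ in the valuation ring. Because the residue fields are perfect, $\mathcal O_{L'}$ and $\mathcal O_{K'}$ sit in a chain
\[
\mathcal O_{K'}\subseteq\mathcal O_{L'}\subseteq \mathcal O_{K'}^{1/p^m}.
\]
Both the first and the last terms yield lower numbering identical to that of $K'/K$: the last after applying $\phi$, with normalized valuations matched.

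What remains is to check that the normalized function $i(\sigma)$, computed via a uniformizer generator, is sandwiched and therefore equal. The expected main obstacle is precisely this sandwich argument. I would handle it by computing $i(\sigma)=v_{L'}(\sigma\pi_{L'}-\pi_{L'})$ using monogenicity ($\mathcal O_{L'}=\mathcal O_L[\pi_{L'}]$, valid with perfect residue fields), and by choosing $\pi_{L'}$ so that $\pi_{L'}^{p^{m'}}$ is a uniformizer of $K'$ (or vice versa). Then $\sigma\pi-\pi=(\sigma\pi^{p^{m'}}-\pi^{p^{m'}})^{1/p^{m'}}$, and the normalized valuations agree because the ramification index of $L'/K'$ exactly compensates the $p^{m'}$ scaling.

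\emph{Step 3: upper numbering.} With lower numbering identified at each finite level, the Herbrand functions $\varphi_{L'/L}$ and $\varphi_{K'/K}$ coincide, since they are defined by $\int_0^u dt/[G_0:G_t]$. Hence the upper filtrations agree at each finite level. Passing to the limit is compatible with quotients, which gives the statement for the absolute Galois groups.
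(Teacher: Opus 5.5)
Your proposal is correct and uses the same key idea as the paper: $y\mapsto y^{p^m}$ commutes with every field automorphism and is an isometry for the normalized valuations, so it carries the ramification data of $L'/L$ onto that of $K'/K$. The paper reduces to $[L:K]=p$ by induction and applies this map once to $L^{\mathrm{sep}}\to K^{\mathrm{sep}}$, while you compute on a uniformizer at each finite level and then pass to upper numbering via Herbrand functions.

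A few points in your Step 2 should be fixed:
\begin{itemize}
\item The ``sandwich'' is not needed. Take $p^m=[L:K]$; then $L^{p^m}=K$ (not $\supseteq$) and $L'^{p^m}=K'$. Hence for \emph{every} uniformizer $\pi$ of $L'$, the element $\pi^{p^m}$ is a uniformizer of $K'$.
\item The identity $\mathcal O_{L'}=\mathcal O_L[\pi_{L'}]$ is false when $L'/L$ is not totally ramified. Use instead $i(\sigma)=v_{L'}(\sigma\pi-\pi)$ for $\sigma\in G_0$. The inertia groups correspond because $L'$ and $K'$ have the same residue field.
\end{itemize}
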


\begin{proof}
Since $L/K$ is purely inseparable and $K^{\mathrm{sep}}/K$ is separable, one has
$L\cap K^{\mathrm{sep}}=K$, and $L$ is linearly disjoint with $K^{\mathrm{sep}}$ over $K$. Hence $LK^{\mathrm{sep}}$ is a separable closure
of $L$, and the restriction identifies the absolute Galois groups. By induction, we can assume $[L:K]=p$. Then the $p$-th power map is an isomorphism from $L$ to $K$ (here we use the assumption that the residue field is perfect) and is moreover an isometry. It uniquely extends to an isomorphism, still by $p$-th power, from $L^{\mathrm{sep}}=LK^{\mathrm{sep}}$ to $K^{\mathrm{sep}}$ and the extended map is still an isometry. The isomorphism between the absolute Galois group of $L$ and $K$ induced by the isometry coincides with the restriction, preserves the lower numbering ramification filtrations and hence the upper numbering ramification filtrations.
\end{proof}

\begin{proposition}\label{prop:swan}
For every $a\in k^{\times}$ and $x\in E^{\times}$, one has
\begin{equation}\label{eq:swan-formula}
    \Swan_\infty(\cH_{a,x})=r^n-m_x(a),
\end{equation}
where $m_x(a)$ is defined in \eqref{eq:mxa-def}.
\end{proposition}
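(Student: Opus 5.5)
Since $\Swan_\infty$ depends only on the restriction to the inertia group $I_\infty$, we may pass to $\bar k$ and compute with $\cH_{a,x}|_{E}$. By Lemma~\ref{tensor induction decomposition},
\[
\cH_{a,x}|_E\cong \phi_r^*\Big(\bigotimes_{j=0}^{n-1}\cF_{x^{q^j}}\Big)\otimes\cL_{\psi_{-a}}.
\]
So the task is to describe the $I_\infty$-representation of each $\phi_r^*\cF_{x^{q^j}}$, tensor them together, and then twist by $\cL_{\psi_{-a}}$.

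\textbf{Step 1: local structure of each factor.} By Katz's local description of the Kloosterman sheaf at $\infty$ (\cite{KatzGKM}, via Fu--Wan type computations), we have
\[
[r]^*(\cK|_{I_\infty})\cong \bigoplus_{\zeta^r=1}\cL_{\psi(r\zeta T)}\otimes(\text{tame, possibly Kummer of order }2).
\]
When $p\mid r$ one instead uses that $\phi_r$ factors through a purely inseparable part; Lemma~\ref{lem:pure-insep} is there to handle this case. Pulling back further by $T\mapsto x T$ and then by $\phi_r$, the factor $\phi_r^*\cF_{x^{q^j}}$ becomes, on $I_\infty$ (after $T\mapsto T^r$ on the source), a direct sum over the $r$-th roots $z_j$ of $x^{q^j}$ of characters $\cL_{\psi(r z_j T)}$ tensored with a tame character. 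Concretely, I would apply Katz's result that $[r]^*\Kl$ at $\infty$ is a sum of $\cL_{\psi(r\zeta T)}\otimes$ tame, for $p\nmid r$. The key observation is that $\phi_r^*f_x^*\cK=\phi_r^*$ of $\cK$ at $xT^r$, and $xT^r=(zT)^r$ with $z^r=x$. Hence $\phi_r^*\cF_x\cong [z]^*[r]^*\cK$, and its restriction to $I_\infty$ is $\bigoplus_{z^r=x}\cL_{\psi(rzT)}\otimes\chi$ with $\chi$ tame.

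\textbf{Step 2: tensor and twist.} Tensoring over $j$, the restriction to $I_\infty$ becomes
\[
\bigoplus_{(z_0,\dots,z_{n-1})}\cL_{\psi((r(z_0+\cdots+z_{n-1})-a)T)}\otimes(\text{tame}),
\]
where $z_j^r=x^{q^j}$, giving $r^n$ summands. Each summand has Swan conductor $1$ unless $r\sum z_j=a$, in which case it is tame with Swan conductor $0$. Summing gives $r^n-m_x(a)$. In the case $p\mid r$ we have $m_x(a)=0$ by \eqref{eq:m-zero-p-divides-r}. There $\phi_r=\phi_{r'}\circ\mathrm{Frob}^e$ with $r=p^e r'$. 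The Frobenius pullback does not change breaks, by Lemma~\ref{lem:pure-insep}, while the all-slopes-$1/r$ structure of $\cK$ under $[r']$ still gives slopes $p^e/p^e\cdot$\dots. More simply, the breaks of $\phi_r^*\cG_x$ are $r\cdot(1/r)=1$ in upper numbering: $\cG_x$ has all breaks $1/r$ since the tensor product of pieces with all breaks $1/r$ keeps each break $\le 1/r$. I expect the need for a finer analysis here to be the main obstacle. In this case I would instead argue that the characters $\cL_{\psi(c T)}$ appearing have $c\notin k$, or that the $p$-part forces breaks exactly $1$ with no cancellation against $\cL_{\psi_{-a}}$. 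So every break of $\cH_{a,x}$ equals $1$ and its Swan conductor is $r^n$.

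\textbf{Main obstacle.} The delicate point is precisely the cancellation analysis: justifying that, after tensoring, a summand has break exactly $1$ (rather than smaller) when $r\sum z_j\neq a$. This is clear in the $p\nmid r$ diagonal description. In the $p\mid r$ case I would rely on Lemma~\ref{lem:pure-insep} to reduce to $\phi_{r'}$ and use that the $p^e$-power Frobenius pullback multiplies nothing. Finally, Swan is computed as rank times break, so $\Swan_\infty$ equals the number of summands with break $1$.
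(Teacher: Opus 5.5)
Your case $p\nmid r$ is correct and follows the paper's argument. It uses the identity $\phi_r^*f_x^*\cK\cong[z]^*\phi_r^*\cK$ for $z^r=x$ and Katz's description $\phi_r^*\cK|_{P_\infty}\cong\bigoplus_{\zeta^r=1}\cL_{\psi_{r\zeta}}$. It then counts the $r^n$ summands $\cL_{\psi_{r(z_0+\cdots+z_{n-1})-a}}$, of which exactly $m_x(a)$ are tame.

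\textbf{The gap is the case $p\mid r$.} The statement includes this case and you leave it open. Worse, the computation you do write down is incorrect, and it is what creates the ``obstacle''.
\begin{enumerate}[label=(\roman*)]
\item Write $r=p^e r'$ with $p\nmid r'$. It is not true that the breaks of $\phi_r^*\cG_x$ are $r\cdot\frac1r=1$. Only the tame factor $\phi_{r'}$ multiplies upper breaks, and it multiplies them by $r'$. The purely inseparable factor $\phi_{p^e}$ leaves them unchanged by Lemma~\ref{lem:pure-insep}, which you cite but never apply.
\item Every break of $\cG_x|_{\bar k}\cong\bigotimes_j\cF_{x^{q^j}}$ is $\le\frac1r$. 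Only this upper bound holds (tensor products can lower breaks, so ``all breaks $\frac1r$'' is wrong), and only the bound is needed. Hence every break of $\phi_r^*\cG_x$ is $\le r'/r=p^{-e}<1$.
\item This strict inequality is the missing idea. $\cL_{\psi_{-a}}$ has the single break $1$, so every break of $\phi_r^*\cG_x\otimes\cL_{\psi_{-a}}$ is exactly $1$. No cancellation is possible, and $\Swan_\infty(\cH_{a,x})=r^n=r^n-m_x(a)$.
\end{enumerate}

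Your proposed fallbacks do not work.
\begin{enumerate}[label=(\roman*)]
\item When $p\mid r$ there is no decomposition of $\phi_r^*\cK$ on $I_\infty$ into Artin--Schreier characters $\cL_{\psi(cT)}$. Its breaks all equal $p^{-e}\notin\mathbb{Z}$, whereas characters of $I_\infty$ have integral breaks by Hasse--Arf. So an argument about ``$c\notin k$'' has nothing to act on.
\item If the breaks of $\phi_r^*\cG_x$ really were $1$, comparing breaks could not exclude tame constituents in the tensor product with $\cL_{\psi_{-a}}$. Your own (incorrect) break count therefore cannot yield the conclusion.
\end{enumerate}
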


\begin{proof}
Write $r=p^s r_0$ with $(r_0,p)=1$. Suppose first that $s\ge 1$. By \eqref{eq:Gx-geometric} all upper breaks of $\cG_x$ at $\infty$ are at most $\frac{1}{r}$. It is well known (for example, see \cite[1.13.1]{KatzGKM}) that upper breaks of $\phi_{r_0}^*\cG_x$ are just $r_0$ times upper breaks of $\cG_x$, and hence are at most $\frac{1}{p^s}$. By Lemma~\ref{lem:pure-insep}, pulling back along $\phi_p$ does not change the break. Thus, every upper break of $\phi_{r}^*\cG_x=\phi_{p^s}^*(\phi_{r_0}^*\cG_x)$ is at most $\frac{1}{p^s}$. The Artin-Schreier sheaf $\cL_{\psi_{-a}}$ has a single upper break $1$ at $\infty$. Therefore, every upper break of
$\cH_{a,x}=\phi_{r}^*\cG_x\otimes\cL_{\psi_{-a}}$ is dominated by $\cL_{\psi_{-a}}$ and is exactly $1$. Since the rank of $\cH_{a,x}$ is $r^n$, we have
\[
        \Swan_\infty(\cH_{a,x})=r^n.
\]
On the other hand, in this case $m_x(a)=0$ by \eqref{eq:m-zero-p-divides-r}. This proves \eqref{eq:swan-formula} when
$p\mid r$.

It remains to consider $s=0$, i.e., $p\nmid r$. Katz's local monodromy theorem for Kloosterman
sheaf at $\infty$ gives 
\begin{equation}\label{eq:Katz-local-infty}
    \phi_{r}^*\cK|_{P_\infty}\cong\bigoplus_{z^r=1}\cL_{\psi_{rz}},
\end{equation}
where $P_\infty$ is the wild inertial subgroup at $\infty$; see \cite[Remark~10.4.5]{KatzGKM}. Therefore, by \eqref{eq:Gx-geometric}
\[
\begin{aligned}
\cH_{a,x}|_{P_\infty}&\cong\cL_{\psi_{-a}}\otimes\phi_{r}^*\cG_x|_{P_\infty}\\
&\cong\cL_{\psi_{-a}}\otimes\bigotimes_{j=0}^{n-1}\phi_{r}^*\cF_{x^{q^j}}|_{P_\infty}\\
&\cong\cL_{\psi_{-a}}\otimes\bigotimes_{j=0}^{n-1}f_{x^{q^j/r}}^*(\phi_{r}^*\cK|_{P_\infty})\\
&\cong\cL_{\psi_{-a}}\otimes\bigotimes_{j=0}^{n-1}\bigoplus_{z_{j}^r=x^{q^j}}\cL_{\psi_{rz_j}}\\
&\cong\bigoplus_{z_j^r=x^{q^j},~0\le j\le n-1}\cL_{\psi_{r(z_0+\cdots+z_{n-1})-a}}.
\end{aligned}
\]
The result follows by the definition of $m_x(a)$ in \eqref{eq:mxa-def}.
\end{proof}

\section{Vanishing of \texorpdfstring{$H_c^2$}{Hc2}}\label{sec:H2}

To estimate \eqref{eq:GL trace formula}, we need
\[
        H_c^2({\Gm}_{\bar{k}},\cH_{a,x})=0.
\]
Fix a geometric generic point $\bar{\eta}$ and let $\pi_{1}^{\geom}=\pi_1({\Gm}_{\bar{k}},\bar{\eta})$. By Poincare duality, this is equivalent to proving the vanishing of
\begin{equation}\label{eq:geom coinv}
    H^0({\Gm}_{\bar{k}},\cH_{a,x}^{\vee})^{\vee}\cong(\cH_{a,x})_{\bar{\eta},\pi_{1}^{\geom}}.
\end{equation}
Here $(\cH_{a,x})_{\bar{\eta},\pi_{1}^{\geom}}$ is the $\pi_{1}^{\geom}$-coinvariant of $(\cH_{a,x})_{\bar{\eta}}$.   

\begin{lemma}\label{lem:H2-p-divides-r}
If $p\mid r$, then
\[
H_c^2({\Gm}_{\bar{k}},\cH_{a,x})=0,\quad H^0({\Gm}_{\bar{k}},\cH_{a,x})=0.
\]
\end{lemma}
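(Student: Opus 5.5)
The plan is to exploit the computation already made in the proof of Proposition~\ref{prop:swan}: when $p\mid r$, every upper break of $\cH_{a,x}$ at $\infty$ equals $1$. In particular $\cH_{a,x}|_{I_\infty}$ has no nonzero vectors on which the wild inertia $P_\infty$ acts trivially, since such vectors would have break $0$. The two vanishing statements will then follow from the fact that both global invariants and global coinvariants under $\pi_1^{\geom}$ must sit inside the corresponding local objects at $\infty$.

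For $H^0$, I will write $H^0({\Gm}_{\bar k},\cH_{a,x})=(\cH_{a,x})_{\bar\eta}^{\pi_1^{\geom}}$. Since $I_\infty$, and hence $P_\infty$, maps into $\pi_1^{\geom}$ (after choosing a path to the local generic point at $\infty$), this space is contained in $(\cH_{a,x})_{\bar\eta}^{P_\infty}$. That space is zero because all breaks equal $1>0$. More precisely, $P_\infty$ is a pro-$p$ group acting through a finite quotient, so the representation restricted to $P_\infty$ is semisimple. Its $P_\infty$-invariants form exactly the break-$0$ part of the break decomposition, and that part is $0$ here.

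For $H_c^2$, I will use \eqref{eq:geom coinv}, which identifies it with the coinvariants $(\cH_{a,x})_{\bar\eta,\pi_1^{\geom}}$. These coinvariants are a quotient of $(\cH_{a,x})_{\bar\eta,P_\infty}$. Because $P_\infty$ acts through a finite $p$-group on an $\ell$-adic space, the action is semisimple, so $P_\infty$-coinvariants are isomorphic to $P_\infty$-invariants, which vanish by the previous paragraph. Equivalently, I can apply the $H^0$ argument to the dual sheaf $\cH_{a,x}^\vee$. Its breaks at $\infty$ are also all equal to $1$, since $\cH_{a,x}^\vee\cong \phi_r^*\cG_x^\vee\otimes\cL_{\psi_{a}}$, and the same domination argument applies because the breaks of $\cG_x^\vee$ are at most $1/r$. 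This gives $H^0({\Gm}_{\bar k},\cH_{a,x}^\vee)=0$, and Poincar\'e duality then yields $H_c^2=0$.

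The only step needing care is the assertion that all breaks of $\phi_r^*\cG_x\otimes\cL_{\psi_{-a}}$ equal exactly $1$. The paper has already justified this in the proof of Proposition~\ref{prop:swan}: the breaks of $\phi_r^*\cG_x$ are at most $1/p^s<1$ (using \cite[1.13.1]{KatzGKM} and Lemma~\ref{lem:pure-insep}), and tensoring with a break-$1$ character then makes every break exactly $1$. I will cite this directly, so I do not expect a real obstacle. The argument is essentially the standard fact that a lisse sheaf on $\Gm$ that is totally wild at $\infty$ has vanishing $H^0$ and $H_c^2$.
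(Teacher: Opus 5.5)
Your proof is correct and follows the paper's argument: the proof of Proposition~\ref{prop:swan} shows that all upper breaks of $\cH_{a,x}$ at $\infty$ equal $1$, so the $P_\infty$-invariants and $P_\infty$-coinvariants vanish. Hence the $\pi_1^{\geom}$-invariants and $\pi_1^{\geom}$-coinvariants, which compute $H^0$ and $H_c^2$, vanish too. Your extra justifications (semisimplicity of the $P_\infty$-action through a finite $p$-group quotient, and global coinvariants being a quotient of the $P_\infty$-coinvariants) fill in details that the paper leaves implicit.
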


\begin{proof}
It has been shown in the proof of Proposition~\ref{prop:swan} that every upper break of $\cH_{a,x}$ at infinity is exactly $1$. Thus, $(\cH_{a,x})_{\bar{\eta}}$ has zero $P_\infty$-coinvariants and $P_\infty$-invariants, which gives $H^0=0$ and $H_c^2=0$.
\end{proof}

For the rest of this section assume $p\nmid r$. For $c\in\bar{k}^\times$, put
\[
\cK_c=[T\mapsto cT]^*\cK|_{\bar{k}}.
\]

\begin{lemma}\label{lem:twists-duals}
(a) If $\cK_c\cong\cK_1$ up to a rank-$1$ twist, then $c=1$.\\
(b) If $\cK_c\cong\cK_{1}^\vee$ up to a rank-$1$ twist, then $c=(-1)^r$ and we have exactly $\cK_{(-1)^r}\cong\cK_{1}^\vee$.
\end{lemma}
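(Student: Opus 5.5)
The plan is to compare local monodromy at $\infty$ of both sides, using Katz's description \eqref{eq:Katz-local-infty}: since $p\nmid r$, one has $\phi_r^*\cK|_{P_\infty}\cong\bigoplus_{z^r=1}\cL_{\psi_{rz}}$. Pulling back by $T\mapsto cT$ and writing $c=\gamma^r$, the identity $\phi_r\circ[\gamma]=[c]\circ\phi_r$ gives
\[
\phi_r^*\cK_c|_{P_\infty}\cong\bigoplus_{z^r=1}\cL_{\psi_{r\gamma z}},
\]
a direct sum of the $r$ distinct characters indexed by the $r$-th roots of $c$ (scaled by $r$). A rank-one twist $\cL$ is harmless only up to a single character $\cL_{\psi_{\beta}}|_{P_\infty}$ on $\phi_r^*$, possibly trivial. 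Indeed, if $\cK_c\cong\cK_1\otimes\cL$, then $\det$ shows that $\cL^{\otimes r}$ is tame at $\infty$. Both $\cK_c$ and $\cK_1$ have all breaks $1/r<1$, so after pulling back the break of $\phi_r^*\cL$ is at most $1$. One also checks that $\phi_r^*\cL|_{P_\infty}$ must be $\cL_{\psi_\beta}$ for some $\beta$. The cleanest way to see this is to compare the two sets of $P_\infty$-characters directly.

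For (a), the isomorphism gives an equality of multisets
\[
\{r\gamma z: z^r=1\}=\{rz+\beta: z^r=1\}.
\]
Summing over $z$, the left side gives $0$ and the right side gives $r\beta$, so $\beta=0$ because $p\nmid r$. Hence $\gamma\mu_r=\mu_r$, which means $\gamma^r=1$, i.e.\ $c=1$.

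For (b), note that the dual of $\cL_{\psi_{rz}}$ is $\cL_{\psi_{-rz}}$. So $\phi_r^*\cK_1^\vee|_{P_\infty}$ corresponds to the set $\{-rz\}$, i.e.\ to the $r$-th roots of $(-1)^r$. The same summation argument kills the twist and gives $c=(-1)^r$.

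It remains to show the exact isomorphism $\cK_{(-1)^r}\cong\cK_1^\vee$ with no twist. For this I would invoke Katz's duality for Kloosterman sheaves \cite[4.1.3]{KatzGKM}: $\cK_\psi^\vee\cong\cK_{\bar\psi}(r-1)$, together with $[t\mapsto(-1)^rt]^*\cK_\psi\cong\cK_{\bar\psi}$. The second isomorphism follows from the convolution description, since inverting all $r$ variables' signs multiplies the product by $(-1)^r$. Geometrically over $\bar k$ the Tate twist disappears, which gives the isomorphism.

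The main obstacle is the step justifying that a rank-one twist restricted to $P_\infty$ is a single Artin–Schreier character compatible with the decomposition. My approach is to use that $\phi_r^*\cL|_{P_\infty}$ is a character whose tensor product with a sum of $\cL_{\psi_\alpha}$'s is again such a sum. Since $\cL_{\psi_\alpha}\otimes\phi_r^*\cL\cong\cL_{\psi_{\alpha'}}$, it follows that $\phi_r^*\cL\cong\cL_{\psi_{\alpha'-\alpha}}$ on $P_\infty$.
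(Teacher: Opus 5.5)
Your argument is correct and is essentially the paper's: compare the $P_\infty$-characters of the $\phi_r$-pullbacks via \eqref{eq:Katz-local-infty}, obtain a multiset equality shifted by a twist parameter, and sum over the $r$ roots to kill the shift using $p\nmid r$. The only difference is in (b): you rerun the summation argument on $\cK_1^\vee$ directly and justify $\cK_{(-1)^r}\cong\cK_1^\vee$ via Katz's duality together with $[t\mapsto(-1)^r t]^*\cK_\psi\cong\cK_{\bar\psi}$, whereas the paper deduces (b) from (a) plus that isomorphism, which it cites from Katz's rigidity corollary.
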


\begin{proof}
If $\cK_c\cong\cK_1$ up to a rank-$1$ twist, then $\phi_r^*\cK_c\cong\phi_r^*\cK_1$ up to a rank-$1$ twist. The local decomposition \eqref{eq:Katz-local-infty} gives an equality of multisets
\begin{equation}\label{eq:A-equality}
        \{rz\mid z^r=c\}=\{rz\mid z^r=1\}+b
\end{equation}
for some $b\in\bar{k}$ coming from the rank-$1$ twist. Since $r\ge 2$, taking sums over the $r$ elements in \eqref{eq:A-equality} gives $0=0+rb$.
Since $p\nmid r$, we get $b=0$, which gives the first assertion. The second assertion follows by the first assertion and the isomorphism $\cK_{(-1)^r}\cong\cK_{1}^\vee$, which is a corollary of the rigidity of Kloosterman sheaf; see \cite[Corollary~4.1.4]{KatzGKM}.
\end{proof}

Now we recall Katz's global monodromy theorem of Kloosterman sheaf. Consider the geometric monodromy representation of $\cK$ at $\bar{\eta}$:
\[
\rho: \pi_1^{\geom}\longrightarrow \GL((\cK)_{\bar{\eta}})\cong\GL_r(\Ql).
\]
Let $G_{\geom}$ be the Zariski closure of $\rho(\pi_1^{\geom})$ in $\GL_r$ (as an algebraic group over $\Ql$).

\begin{theorem}\cite[Theorem 11.1]{KatzGKM}\label{global monodromy}
The geometric monodromy group $G_\geom$ is a connected algebraic group over $\Ql$ with simple Lie algebra. Explicitly,
\[
G_\geom=
\begin{cases}
\operatorname{Sp}_r, & r\text{ even},\\
\operatorname{SL}_r, & r\text{ odd and }p\text{ odd},\\
\operatorname{SO}_r, & p=2,\ r\text{ odd},\ r\ne 7,\\
G_2, & p=2,\ r=7.
\end{cases}
\]
\end{theorem}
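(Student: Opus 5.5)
Since Theorem~\ref{global monodromy} is quoted from \cite{KatzGKM}, this is a sketch of how I would reconstruct Katz's argument. The plan is to pin down $G_\geom$ from local monodromy information alone. Throughout, write $\cK$ for the geometric sheaf $\cK|_{\bar k}$. I would use three inputs.
\begin{enumerate}
\item At $0$, $\cK$ is tame and unipotent, with a \emph{single} Jordan block of size $r$ \cite[Theorem~7.4.1]{KatzGKM}.
\item At $\infty$, $\cK$ is totally wild with all breaks equal to $1/r$, so $\Swan_\infty(\cK)=1$.
\item $\cK$ is pure and geometrically irreducible, since a rank-$r$ sheaf on ${\Gm}$ with $\Swan_\infty=1$ cannot split.
\end{enumerate}
Purity together with geometric irreducibility makes $G_\geom$ reductive. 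Its identity component $G_\geom^0$ is then reductive and, as I argue below, acts irreducibly.

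\textbf{Step 1: upper bound on $G_\geom$ from dualities.} If $r$ is even, rigidity gives $\cK\cong\cK^\vee$ (Lemma~\ref{lem:twists-duals}(b) with $c=1$). Since $\cK$ is irreducible, there is a unique nondegenerate invariant bilinear form up to scalar. It is alternating because $\det\cK$ is geometrically trivial and the unipotent local monodromy at $0$ has a single even-size block, which preserves only a symplectic form. Hence $G_\geom\subseteq \operatorname{Sp}_r$. If $r$ is odd, the determinant is geometrically trivial, computed from the local data (unipotent at $0$, break below $1$ at $\infty$), so $G_\geom\subseteq\operatorname{SL}_r$. When $p=2$ and $r$ is odd, Lemma~\ref{lem:twists-duals}(b) gives $\cK_{-1}=\cK_1\cong\cK^\vee$, and the form is now forced to be symmetric, so $G_\geom\subseteq\operatorname{SO}_r$.

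\textbf{Step 2: connectedness and Lie-irreducibility.} Here I would rule out $G_\geom^0$ acting reducibly, i.e.\ $\cK$ being induced or a tensor product with a finite-monodromy piece. The quotient $G_\geom/G_\geom^0$ corresponds to a finite étale cover of ${\Gm}_{\bar k}$ that is tame over $0$ and has all $\infty$-breaks at most $1/r<1$. By Euler characteristic, or Swan, counting, such a cover is a Kummer cover, which would force the local monodromy at $0$ to be non-unipotent or to have several blocks. Either conclusion contradicts input (1), so $G_\geom$ is connected. An induction $\cK\cong[\,\cdot\,]_*\cG$ or a tensor decomposition would similarly contradict the single Jordan block at $0$ or $\Swan_\infty=1$. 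Thus $G_\geom=G_\geom^0$ acts irreducibly.

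\textbf{Step 3: classification, the main obstacle.} $G_\geom$ is now a connected irreducible subgroup of $\operatorname{GL}_r$ containing a unipotent element with one Jordan block. By Jacobson–Morozov this element lies in a principal $\operatorname{SL}_2$ acting by $\Sym^{r-1}$. I would then invoke Gabber's theorem \cite{KatzGKM}: such a group has simple Lie algebra and is one of $\operatorname{SL}_r$, $\operatorname{Sp}_r$, $\operatorname{SO}_r$, $G_2$ (in its $7$-dimensional representation), or the image of $\operatorname{SL}_2$ itself. I expect this to be the hardest step.

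To finish, I would compare with the upper bounds from Step~1 and eliminate the small cases.
\begin{itemize}
\item \emph{The principal $\operatorname{SL}_2$ image.} I would rule this out using the wild local monodromy at $\infty$: the images of $P_\infty$, described for $p\nmid r$ by \eqref{eq:Katz-local-infty}, are incompatible with factoring through $\Sym^{r-1}$ of a rank-$2$ sheaf. The obstruction comes from break/Swan bookkeeping, since $\Sym^{r-1}$ of a rank-$2$ sheaf has breaks and Swan conductor that cannot match.
\item \emph{$\operatorname{SO}_r$ when $p$ is odd and $r$ is odd.} The form from Step~1 does not exist here, so $\operatorname{SO}_r$ does not occur.
\item \emph{$G_2$ versus $\operatorname{SO}_7$ when $p=2$, $r=7$.} This is decided by whether $\cK$ carries an invariant alternating trilinear form, i.e.\ whether $(\Lambda^3\cK)^{\pi_1^\geom}\neq0$. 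I would settle this with Katz's explicit computation: a moment or Swan computation of $H^2_c(\Lambda^3\cK)$, which is nonzero exactly when $p=2$ and $r=7$. This gives $G_2$ in that case and $\operatorname{SO}_r$ otherwise.
\end{itemize}
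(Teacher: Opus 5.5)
The paper gives no argument for this theorem; it simply quotes \cite[Theorem~11.1]{KatzGKM}. Your outline has the right overall shape: local monodromy at $0$ and $\infty$, then self-duality, then a classification of connected irreducible groups containing a regular unipotent, then elimination of cases. However, two steps do not go through as written.

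\textbf{Connectedness.} You claim that a finite \'etale cover of ${\Gm}_{\bar k}$ that is tame over $0$ with all $\infty$-breaks $<1$ must be Kummer. This is false. For $p\nmid r$, the sheaf $(\phi_r)_*\cL_{\psi_1}$ has finite geometric monodromy, is tame at $0$, has all $\infty$-breaks equal to $1/r$, and is irreducible of rank $r$ (Swan conductor $1$). Its monodromy group is therefore non-abelian, so the cover is not Kummer. On ${\Gm}$ the Euler--Poincar\'e formula only gives $h^1_c=\Swan_\infty$ for a nontrivial irreducible constituent, which contradicts nothing.

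The missing idea is to use input (1) \emph{before} counting. The image of $I_0$ lies in the Zariski closure of the group generated by a unipotent element. That closure is a copy of $\mathbb{G}_a$, hence connected, so $I_0$ maps into $G_\geom^0$ and the cover is unramified over $0$. For a nontrivial irreducible representation $\rho$ of $G_\geom/G_\geom^0$, the sheaf $\mathcal{F}_\rho$ is then lisse on $\mathbb{A}^1_{\bar k}$ with $\infty$-breaks $\le 1/r$. This gives the contradiction $0\ge -h^1_c=\chi_c(\mathbb{A}^1_{\bar k},\mathcal{F}_\rho)=\operatorname{rk}\rho-\Swan_\infty(\mathcal{F}_\rho)>0$. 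Connectedness really is needed later: without it, $G_\geom$ could a priori be $\operatorname{SO}_r$ times finite scalars, and $\cK$ would then be self-dual only up to a twist.

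\textbf{The case $p$, $r$ odd.} You never show that $\cK$ is not geometrically self-dual. The sentence ``the form from Step~1 does not exist'' only records that Step~1 did not construct one. Yet this is exactly the input that separates $\operatorname{SL}_r$ from $\operatorname{SO}_r$. It is also what excludes $\Sym^{r-1}(\operatorname{SL}_2)$ and, for $r=7$, $G_2$, since both are orthogonal.

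Since $\cK^\vee\cong\cK_{-1}$ by \cite[Corollary~4.1.4]{KatzGKM}, you must show $\cK_{-1}\not\cong\cK_1$ for $p$ odd. For $p\nmid r$ this is Lemma~\ref{lem:twists-duals}(a) with $c=-1$. For $p\mid r$ (e.g.\ $r=p=3$) that proof is unavailable, but the Mellin transform works. For every character $\chi$ of $\F_{q^m}^\times$ one has $\sum_{t\in\F_{q^m}^\times}\chi(t)\Kl_{r,\F_{q^m},\psi}(t)=g(\chi)^r\neq0$, where $g(\chi)$ is the Gauss sum of $\chi$ and $\psi\circ\Tr$. A geometric isomorphism yields an isomorphism $[t\mapsto -t]^*\cK\cong\cK\otimes\beta^{\deg}$ over $k$, because $\cK$ is geometrically irreducible. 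This forces $\chi(-1)=\beta^m$ for all $\chi$, hence $\beta^m=1$ and $-1=1$, a contradiction.

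There are also some smaller slips.
\begin{itemize}
\item $(\Lambda^3\cK)^{\pi_1^{\geom}}$ is also nonzero for $r=3$, where $\Lambda^3\cK=\det\cK$, so your ``exactly when $p=2$, $r=7$'' is inaccurate.
\item $\Sym^{r-1}(\operatorname{SL}_2)$ cannot be ruled out for $r=2$ or $(r,p)=(3,2)$, where it equals $\operatorname{Sp}_2$, resp.\ $\operatorname{SO}_3$.
\item For $r=7$, $p=2$, note that $\chi_c$ on ${\Gm}$ cannot detect $H^2_c$. What works is the middle extension: $\chi(\Pj^1_{\bar k},j_*\Lambda^3\cK)\ge-\Swan_\infty(\Lambda^3\cK)+\dim(\Lambda^3\cK)^{I_0}=-4+5$. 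Here only the $7$ characters of $P_\infty$ indexed by lines of the Fano plane in $\mu_7=\F_8^\times$ are trivial, and $\Lambda^3\Sym^6$ has $5$ irreducible $\operatorname{SL}_2$-constituents. Self-duality gives $\chi=2h^0-h^1$, so $h^0\ge1$.
\end{itemize}
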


We also need the following Goursat-Kolchin-Ribet criterion.
\begin{theorem}\cite[Proposition~1.8.2]{KatzESDE}\label{GKR}
Let $G$ be an algebraic group over some algebraically closed field $K$ with characteristic zero, $n\ge 2$ be an integer and $\rho_i:G\rightarrow\GL(V_i)~(1\le i\le n)$ be a set of finite dimensional irreducible representations of $G$ whose direct sum is faithful. Let $G_i=\rho_i(G)$ and suppose that
\begin{enumerate}[label=(\alph*)]
\item each $G_i^{\circ,\der}$ acts irreducibly on $V_i$ and $\Lie(G_i^{\circ,\der})$ is simple;
\item for $i\ne j$ the representations $(G_i^{\circ,\der},V_i)$ and $(G_j^{\circ,\der},V_j)$ are Goursat-adapted;
\item for $i\ne j$ the representations $\rho_i$ and $\rho_j$ are not isomorphic up to a twist;
\item for $i\ne j$ the representations $\rho_i^\vee$ and $\rho_j$ are not isomorphic up to a twist.
\end{enumerate}
Then $G^{\circ,\der}$ is the subgroup $\prod_{i=1}^n G_i^{\circ,\der}$ of $\prod_{i=1}^n\GL(V_i)$.
\end{theorem}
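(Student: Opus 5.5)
The plan is the standard Goursat-type argument at the level of Lie algebras, followed by a Clifford-theory step that turns a ``diagonal'' identification into an isomorphism up to twist. Since the direct sum of the $\rho_i$ is faithful, we regard $G$ as a closed subgroup of $\prod_i \GL(V_i)$. Put $H=G^{\circ,\der}$ and $H_i=G_i^{\circ,\der}$. We have $\rho_i(G^\circ)=G_i^\circ$, because the image of a connected group is connected and $G^\circ$ has finite index. Taking derived groups of surjections then gives $\rho_i(H)=H_i$. So $H$ is a connected subgroup of $\prod_i H_i$ that surjects onto each factor. On Lie algebras, $\mathfrak h=\Lie(H)\subset\bigoplus_i\mathfrak h_i$ surjects onto each $\mathfrak h_i=\Lie(H_i)$, and each $\mathfrak h_i$ is simple by (a). Since all groups involved are connected, it suffices to prove $\mathfrak h=\bigoplus_i\mathfrak h_i$.

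\emph{Step 1 (Goursat for Lie algebras).} First take two factors $i\ne j$, and let $\mathfrak h_{ij}$ be the image of $\mathfrak h$ in $\mathfrak h_i\oplus\mathfrak h_j$. The kernels of its two projections are ideals of the simple algebras $\mathfrak h_j$ and $\mathfrak h_i$. Hence either $\mathfrak h_{ij}=\mathfrak h_i\oplus\mathfrak h_j$, or $\mathfrak h_{ij}$ is the graph of a Lie algebra isomorphism $\alpha:\mathfrak h_i\to\mathfrak h_j$. Next, an induction on $n$ handles the general case. A subalgebra of $\bigoplus\mathfrak h_i$ that surjects onto every pairwise sum $\mathfrak h_i\oplus\mathfrak h_j$ is everything. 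To see this, note that $\mathfrak h$ contains elements of the form $(X,0,\dots)$ modulo the other factors, and brackets of such elements with surjective projections generate each $\mathfrak h_i\oplus 0$, since $[\mathfrak h_i,\mathfrak h_i]=\mathfrak h_i$. So it suffices to exclude the graph case for every pair.

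\emph{Step 2 (excluding graphs).} Suppose $\mathfrak h_{ij}$ is the graph of $\alpha$. Then $V_j$, viewed as an $\mathfrak h_i$-module through $\alpha$, is the restriction of $\mathfrak h$ acting on $V_j$. Goursat-adaptedness (b) is exactly the hypothesis that any such isomorphism carries $V_i$ to a module isomorphic to $V_j$ or to $V_j^\vee$. Therefore, as representations of $\mathfrak h$, and hence of the connected group $H$, we get $V_j\cong V_i$ or $V_j\cong V_i^\vee$. This is the step I expect to be the main obstacle: one must unwind the definition of Goursat-adapted carefully, including passing from Lie algebra modules to the groups $H_i$, which need not be simply connected.

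\emph{Step 3 (Clifford).} Say $V_j\cong V_i$ as $H$-modules; the dual case is identical with $\rho_i^\vee$ in place of $\rho_i$. By (a), $H$ acts irreducibly on $V_i$, so by Schur the space $L=\operatorname{Hom}_H(V_i,V_j)$ is one-dimensional. Since $H$ is normal in $G$, the group $G$ acts on $L$ by $g\cdot\phi=\rho_j(g)\phi\rho_i(g)^{-1}$, through some character $\chi$. A generator of $L$ then gives an isomorphism $\rho_j\cong\rho_i\otimes\chi$, which contradicts (c), and in the dual case (d). Hence every pairwise projection is surjective. Step 1 then gives $\mathfrak h=\bigoplus\mathfrak h_i$, and connectedness gives $H=\prod_i H_i$.
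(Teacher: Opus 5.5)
Your argument is correct. It is essentially Katz's own proof of \cite[Proposition~1.8.2]{KatzESDE}, which the paper only cites and does not reprove: reduce to pairs via the simple Lie algebras $\Lie(G_i^{\circ,\der})$, use Goursat-adaptedness to get $V_i\cong V_j$ or $V_j^\vee$ as $G^{\circ,\der}$-modules, then let $G$ act on the line $\operatorname{Hom}_{G^{\circ,\der}}(V_i,V_j)$ to produce the twisting character.

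The obstacle you anticipate in Step~2 is harmless, for two reasons. First, the group-level Goursat-adapted condition can be applied directly to the image of $H$ in $H_i\times H_j$, which gives the isomorphism of $H$-modules outright. Second, even working only with $\mathfrak h$, an isomorphism of $\mathfrak h$-modules is already an isomorphism of $H$-modules, because $H$ is connected and we are in characteristic zero.
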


We will not give the definition of Goursat-adapted. Later we will point out that this is automatically satisfied in our case.

Return to our setting. Fix $x\in E^{\times}$. For $0\le i\le n-1$, put 
\[
\varphi_i: \pi_1^{\geom}\longrightarrow \GL_r(\Ql)
\]
be the geometric monodromy representation of $\cK_{x^{q^i}}$ and $G_{\geom,i}$ be the Zariski closure of $\varphi_i(\pi_1^{\geom})$. From a representation-theoretic viewpoint, pulling back along $T\mapsto cT$ for some $c\in\bar{k}$ corresponds to composing $\rho$ with an automorphism of $\pi_1^{\geom}$. Hence $G_{\geom,i}=G_{\geom}$ and we use the indices $i$ to distinguish different coordinates. For convenience, we still denote $\pi_1^{\geom}\rightarrow G_{\geom,i}$ by $\varphi_i$. 

We define an equivalence relation on $\{0,1,\cdots,n-1\}$. For $0\le i,j\le n-1$, set
\[
    i\sim j\quad\Longleftrightarrow\quad x^{q^i}=x^{q^j}\text{ or }x^{q^i}=(-1)^r x^{q^j}.
\]
Then by Lemma~\ref{lem:twists-duals}, if $i,j$ are in the same equivalence class, then $\varphi_i\cong\varphi_j$ or $\varphi_i\cong\varphi_j^\vee$ and otherwise $\varphi_i$ is neither isomorphic to $\varphi_j$ nor $\varphi_j^\vee$ even up to a twist. We choose one representative from each class. These representatives form a subset $R$ of $\{0,1,\cdots,n-1\}$. Combining with \eqref{eq:Gx-geometric}, the geometric monodromy representation of $\cG_x$ factors as
\begin{equation}\label{factor}
\pi_1^{\geom}\xrightarrow{\Delta_\varphi}\prod_{j\in R}G_{\geom,j}\longrightarrow\prod_{i=0}^{n-1}G_{\geom,i}\longrightarrow\GL((\Ql^r)^{\otimes n}),
\end{equation}
where $\Delta_\varphi(\sigma)=(\varphi_0(\sigma),\cdots,\varphi_{n-1}(\sigma))$ for $\sigma\in\pi_1^{\geom}$ and the second arrow is given diagonally on each equivalent class: if $i\sim j$, then the image of $g\in G_{\geom,j}$ on $G_{\geom,i}$ is $g$ if $\varphi_i\cong\varphi_j$ and $(g^t)^{-1}$ if $\varphi_i\cong\varphi_j^\vee$.

\begin{lemma}\label{zariski closure}
The Zariski closure $G$ of $\Delta_\varphi(\pi_1^{\geom})$ in $\prod_{j\in R}G_{\geom,j}$ is the whole $\prod_{j\in R}G_{\geom,j}$.
\end{lemma}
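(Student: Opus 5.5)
The plan is to apply the Goursat--Kolchin--Ribet criterion (Theorem~\ref{GKR}) to the algebraic group $G$ itself, taking as representations the coordinate projections $\rho_j\colon G\to G_{\geom,j}\subset\GL_r(\Ql)$ for $j\in R$. If $|R|=1$ there is nothing to prove: $\varphi_j(\pi_1^{\geom})$ is Zariski dense in $G_{\geom,j}$ by definition. So assume $|R|\ge 2$.

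The direct sum $\bigoplus_{j\in R}\rho_j$ is the inclusion $G\hookrightarrow\prod_{j\in R}\GL_r$, hence it is faithful. Each $\rho_j$ has image $G_{\geom,j}$: the image is closed and contains the dense subgroup $\varphi_j(\pi_1^{\geom})$. By Theorem~\ref{global monodromy}, $G_{\geom,j}=G_\geom$ is connected with simple Lie algebra, and it is one of $\operatorname{Sp}_r,\operatorname{SL}_r,\operatorname{SO}_r,G_2$. Each of these is semisimple, so $G_{\geom,j}^{\circ,\der}=G_\geom$, and each acts irreducibly on the standard representation. This gives irreducibility of $\rho_j$ and condition~(a).

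Conditions~(c) and~(d) come from the choice of $R$ together with Lemma~\ref{lem:twists-duals}. For distinct $i,j\in R$ we have $x^{q^i}\ne x^{q^j}$ and $x^{q^i}\ne(-1)^r x^{q^j}$. After rescaling the coordinate by $x^{-q^j}$, so that we compare $\cK_c$ with $\cK_1$ for $c=x^{q^i}/x^{q^j}$, the lemma shows that $\varphi_i$ is not isomorphic to $\varphi_j$ or to $\varphi_j^\vee$, even up to a rank-one twist. Two points need care here. First, an isomorphism of representations of $G$ up to a character of $G$ restricts along the dense image of $\pi_1^{\geom}$ to an isomorphism of sheaves up to a rank-one twist. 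Second, the rescaling $T\mapsto cT$ is an automorphism of $\Gm_{\bar k}$, so it preserves these relations.

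Condition~(b), Goursat-adaptedness, is the step I expect to be the main obstacle, since the definition is deliberately not given. I would check it against Katz's list in \cite{KatzESDE}. There he shows that a pair of irreducible representations of groups with simple Lie algebra is Goursat-adapted unless an exceptional coincidence occurs, namely an isomorphism of the groups that does not come from an isomorphism or duality of the representations. In our situation both groups are the same classical group or $G_2$, in its standard representation of the same dimension $r$, so every isomorphism $G_i^{\circ,\der}\cong G_j^{\circ,\der}$ compatible with the two representations carries $V_i$ to $V_j$ or to $V_j^\vee$. I would argue that this rules out the exceptional cases. The possible trouble spots are the outer automorphisms of $\operatorname{SO}_8$ (triality) and of $\operatorname{Sp}_4\cong\operatorname{Spin}_5$ in small rank. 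However, $\operatorname{SO}_r$ only occurs with $r$ odd, and Katz's list treats these standard representations as Goursat-adapted, so I would cite that list.

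Once (a)--(d) hold, Theorem~\ref{GKR} gives $G^{\circ,\der}=\prod_{j\in R}G_{\geom,j}^{\circ,\der}=\prod_{j\in R}G_{\geom,j}$. Then $G\supseteq G^{\circ,\der}$ is the whole product, while $G\subseteq\prod_{j\in R}G_{\geom,j}$ holds by construction. Hence $G=\prod_{j\in R}G_{\geom,j}$.
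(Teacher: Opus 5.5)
Your proposal is correct and follows the paper's proof: you apply the Goursat--Kolchin--Ribet criterion (Theorem~\ref{GKR}) to $G$ with the coordinate projections, get (a) from Katz's monodromy theorem, (b) by citing Katz's examples, and (c),(d) from Lemma~\ref{lem:twists-duals}. You are slightly more careful than the paper in treating $|R|=1$ separately, where the criterion (which needs $n\ge 2$) does not apply, and in spelling out $G\supseteq G^{\circ,\der}=\prod_{j\in R}G_{\geom,j}$; a small correction is that $\operatorname{Sp}_4$ has no outer automorphisms in characteristic zero, so it is not actually a trouble spot for (b).
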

\begin{proof}
This is a direct corollary of Theorem~\ref{GKR}. For $i\in R$, let $p_i:\prod_{j\in R}G_{\geom,j}\rightarrow G_{\geom,i}$ be the projection onto the $i$-th coordinate and $\rho_i$ be the composition
\[
G\hookrightarrow\prod_{j\in R}G_{\geom,j}\xrightarrow{p_i}G_{\geom,i}\hookrightarrow\GL_r(\Ql^r).
\]
Then $G_i=\rho_i(G)$ is exactly $G_{\geom,i}$ because it is closed in $G_{\geom,i}$ and contains $\varphi_i(\pi_1^{\geom})$. Direct sum of all $\rho_i$ is faithful by the definition of $G$. By Theorem~\ref{global monodromy}, the Lie algebra of $G_{\geom,i}^{\circ,\der}=G_{\geom,i}$ is simple and the action of $G_{\geom,i}$ on $\Ql^r$ is the standard representation, which is irreducible. For $i\ne j$ the representations $(G_{\geom,i},\Ql^r)$ and $(G_{\geom,j},\Ql^r)$ are automatically Goursat-adapted by \cite[Examples 1.8.1]{KatzESDE}. Moreover, $\rho_i$ is not isomorphic to $\rho_j$ or $\rho_j^\vee$ up to a twist because $\varphi_i$ factors as $\rho_i\circ\Delta_\varphi$ and, as explained above, $\varphi_i$ is not isomorphic to $\varphi_j$ or $\varphi_j^\vee$ up to a twist. The result follows by Theorem~\ref{GKR}. 
\end{proof}

\begin{lemma}\label{lem:H2-vanish}
For every $r\ge 2$, every $a\in k^{\times}$, and every $x\in E^{\times}$, one has
\[
H_c^2({\Gm}_{\bar{k}},\cH_{a,x})=0,\quad H^0({\Gm}_{\bar{k}},\cH_{a,x})=0.
\]
\end{lemma}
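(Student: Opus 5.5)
The case $p\mid r$ is Lemma~\ref{lem:H2-p-divides-r}, so we assume $p\nmid r$. By \eqref{eq:geom coinv} and its analogue for invariants, it suffices to show two things. The first is that $(\cH_{a,x})_{\bar\eta}$ has no nonzero $\pi_1^{\geom}$-coinvariants. The second is that it has no nonzero $\pi_1^{\geom}$-invariants. Since $\cH_{a,x}=\phi_r^*\cG_x\otimes\cL_{\psi_{-a}}$, these amount to saying that the geometrically irreducible, rank-one, geometrically nontrivial sheaf $\cL_{\psi_a}$ (resp.\ its dual) does not occur as a quotient (resp.\ subobject) of $\phi_r^*\cG_x|_{\bar k}$.

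The plan is to control the geometric monodromy of $\phi_r^*\cG_x$ using Lemma~\ref{zariski closure}. First, by \eqref{factor}, the Zariski closure of the geometric monodromy of $\cG_x$ is the image of $\prod_{j\in R}G_{\geom,j}$ in $\GL((\Ql^r)^{\otimes n})$. Second, pulling back by $\phi_r$ replaces $\pi_1^{\geom}$ by an open subgroup of finite index $r$. Hence the Zariski closure for $\phi_r^*\cG_x$ is a finite-index closed subgroup of $\prod_{j\in R}G_{\geom,j}$. By Theorem~\ref{global monodromy} each $G_{\geom,j}$ is connected, so this subgroup contains the identity component, which is the whole product. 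Thus the monodromy of $\phi_r^*\cG_x$ still has Zariski closure $\prod_{j\in R}G_{\geom,j}$, a connected semisimple group.

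Next, $W$ restricted to $\prod_{j\in R}G_{\geom,j}$ decomposes as an external tensor product over classes. Each factor is a tensor product of copies of the standard representation and its dual of a single group $G_{\geom,j}\in\{\operatorname{Sp}_r,\operatorname{SL}_r,\operatorname{SO}_r,G_2\}$. Any one-dimensional constituent of such a representation of a connected semisimple group is trivial, since semisimple groups have no nontrivial characters. So if $\cL_{\psi_a}$ or its dual occurred as a quotient or subobject of $\phi_r^*\cG_x|_{\bar k}$, its character of $\pi_1^{\geom}$ would factor through $\prod_{j\in R} G_{\geom,j}$ and would therefore be trivial. This contradicts the fact that $\cL_{\psi_{\pm a}}$ is geometrically nontrivial for $a\neq0$, which is visible from its break $1$ at $\infty$. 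This gives both vanishings.

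The main obstacle is the passage from Zariski density to the sub/quotient statement. We use that $\pi_1^{\geom}$-invariant subspaces and quotients of $W$ are the same as those for the Zariski closure. We must then argue that the one-dimensional quotient on which $\pi_1^{\geom}$ acts via $\cL_{\psi_a}$ is stable under the whole closure $G$, and that $G$ acts on it through a character. That character must be trivial because $G$ is connected semisimple, hence perfect.
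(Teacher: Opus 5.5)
Your proposal is correct and is essentially the paper's proof. You reduce to $p\nmid r$, use Lemma~\ref{zariski closure} together with the finite-index and connectedness argument to see that the monodromy of $\phi_r^*\cG_x$ is Zariski dense in the connected semisimple group $\prod_{j\in R}G_{\geom,j}$, and conclude that every rank-one subquotient is geometrically trivial, so the geometrically nontrivial $\cL_{\psi_a}$ cannot occur.

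One harmless slip: $H^0({\Gm}_{\bar k},\cH_{a,x})\neq 0$ would mean that $\cL_{\psi_a}$ itself, not its dual, is a subobject of $\phi_r^*\cG_x|_{\bar k}$. Your argument excludes every nontrivial rank-one subquotient, so this does not affect the conclusion.
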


\begin{proof}
The case $p\mid r$ is Lemma~\ref{lem:H2-p-divides-r}.  Assume $p\nmid r$. By \eqref{eq:geom coinv}, it is enough to show that $\phi_{r}^*\cG_x|_{\bar{k}}$ does not admit a rank-$1$ sub-quotient isomorphic to $\cL_{\psi_a}|_{\bar{k}}$. By \eqref{factor}, the geometric monodromy representation of $\phi_{r}^*\cG_x$ factors as 
\[
\pi_1^{\geom}\xrightarrow{\phi_{r,*}}\pi_1^{\geom}\xrightarrow{\Delta_\varphi}\prod_{j\in R}G_{\geom,j}\longrightarrow\GL((\Ql^r)^{\otimes n}).
\]
The map $\phi_r$ is a finite \'etale cover over $\Gm$ and thus the image of $\pi_1^{\geom}$ under $\phi_{r,*}$ is a subgroup of $\pi_1^{\geom}$ of finite index. Hence, by Lemma~\ref{zariski closure} and Theorem~\ref{global monodromy}, the Zariski closure of $\Delta_\varphi\circ\phi_{r,*}(\pi_1^{\geom})$ in $\prod_{j\in R}G_{\geom,j}$ contains $(\prod_{j\in R}G_{\geom,j})^{\circ}=\prod_{j\in R}G_{\geom,j}$, and hence must be equal to $\prod_{j\in R}G_{\geom,j}$. Therefore, any one-dimensional sub-quotient of the geometric monodromy representation of $\phi_{r}^*\cG_x$ is a character of $\prod_{j\in R}G_{\geom,j}$, which must be trivial. Since $a\neq 0$, the Artin-Schreier sheaf $\cL_{\psi_a}|_{\bar{k}}$ is geometrically nontrivial. Hence, it cannot be isomorphic to a rank-$1$ subquotient of $\phi_r^*\cG_x|_{\bar{k}}$. This gives the result.
\end{proof}

\section{Cohomological estimates}\label{sec:cohomological-estimates}

We now give the estimate for $S(E,r,x,a)$ and then deduce Theorem~\ref{main thm}.

\begin{lemma}\label{engenvalue 1}
The operator $\Frob_q$ has the eigenvalue $1$ on $H_c^1({\Gm}_{\bar{k}},\cH_{a,x})$.
\end{lemma}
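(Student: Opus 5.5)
The eigenvalue $1$ should come from the local monodromy of $\cH_{a,x}$ at $0$. The natural tool is the standard exact sequence for $j:{\Gm}_{\bar{k}}\hookrightarrow\Pj^1_{\bar{k}}$. Applying $H^*(\Pj^1_{\bar k},-)$ to
\[
0\to j_!\cH_{a,x}\to j_*\cH_{a,x}\to (\cH_{a,x})^{I_0}\oplus(\cH_{a,x})^{I_\infty}\to 0
\]
(skyscrapers at $0$ and $\infty$) gives a $\Frob_q$-equivariant exact sequence
\[
0\to H^0({\Gm}_{\bar{k}},\cH_{a,x})\to (\cH_{a,x})_{\bar\eta}^{I_0}\oplus(\cH_{a,x})_{\bar\eta}^{I_\infty}\to H_c^1({\Gm}_{\bar{k}},\cH_{a,x}).
\]
By Lemma~\ref{lem:H2-vanish}, $H^0({\Gm}_{\bar{k}},\cH_{a,x})=0$. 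Hence $(\cH_{a,x})_{\bar\eta}^{I_0}$ injects $\Frob_q$-equivariantly into $H_c^1$. It therefore suffices to exhibit a nonzero vector in $(\cH_{a,x})^{I_0}_{\bar\eta}$ fixed by $\Frob_q$, i.e.\ by the Frobenius at the point $0$ acting on inertia invariants.

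To find it, I will use Katz's description of $\cK$ at $0$ \cite[7.4]{KatzGKM}: the inertia $I_0$ acts unipotently with a single Jordan block, and the line $\cK^{I_0}$ is spanned by a vector $e$ on which the geometric Frobenius of any finite extension $L/k$ acts by $1$. The plan then passes this vector through the three operations defining $\cH_{a,x}$.

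First, pulling back by $f_x$ does not change the local monodromy at $0$, since translation by $x$ fixes $0$. So $V=(\cF_x)_{\bar\eta}$ has a line of $I_0$-invariants spanned by $e$, on which $\Frob_E$ acts by $1$.

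Second, I pass to the tensor induction $W$ of \eqref{tensor representation}. Choose the lift $F$ in the decomposition group at $0$, which is legitimate because $W$ is independent of the choice of $F$. The vector $e\otimes e\otimes\cdots\otimes e$ is $I_0$-invariant, because each $V^{F^j}$ has $F^{-j}I_0F^j=I_0$ acting unipotently fixing $e$. By \eqref{F permute}, $F$ sends it to $F^n(e)\otimes e\otimes\cdots\otimes e=e\otimes\cdots\otimes e$, since $F^n$ is a Frobenius for $E$ at $0$ and fixes $e$.

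Third, pulling back by $\phi_r$ fixes $0$ and replaces $I_0$ by a subgroup of finite index, so invariants only grow and the same vector stays invariant with the same Frobenius eigenvalue. Tensoring with $\cL_{\psi_{-a}}$, which is lisse at $0$ with Frobenius trace $\psi(0)=1$, changes nothing.

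This produces a $\Frob_q$-fixed nonzero vector in $(\cH_{a,x})^{I_0}$, and hence the eigenvalue $1$ on $H_c^1$.

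The main obstacle is the bookkeeping of Frobenius at $0$ through the tensor induction: checking that a lift of Frobenius at $0$ can be taken as the permuting element $F$, and that $F^n$ really acts on $e$ by the Frobenius eigenvalue $1$ of $\cK$ at $0$ over $E$. I would first check the latter via the known fact that $\cK^{I_0}$ has weight $0$ with trivial Frobenius, i.e.\ $\operatorname{tr}(\Frob_{L,0}\mid (j_*\cK)_0)=1$ for every finite extension $L/k$.
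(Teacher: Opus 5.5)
Your proposal is correct and follows essentially the same route as the paper. You inject $(\cH_{a,x})^{I_0}_{\bar\eta}$ into $H_c^1$ via the $j_!\to j_*$ sequence and $H^0=0$, then carry the Frobenius-fixed $I_0$-invariant line of $\cK$ at $0$ through $f_x^*$, the tensor induction (as $v\otimes\cdots\otimes v$), $\phi_r^*$ and $\otimes\cL_{\psi_{-a}}$. Your explicit choice of $F$ in the decomposition group at $0$ (so that $F$ normalizes $I_0$) and your separate treatment of $\phi_r^*$ spell out two points the paper leaves implicit.
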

\begin{proof}
This is essentially the last paragraph of \cite[Section 3]{LinWanEtale}. Let $j: \Gm \hookrightarrow \Pj^1$ be the standard open immersion. For any lisse $\ell$-adic sheaf $\cF$ on $\Gm$, the compactly supported cohomology $H^i_c({\Gm}_{\bar{k}}, \cF)$ is canonically isomorphic to $H^i(\Pj^1_{\bar{k}}, j_! \cF)$. Consider the short exact sequence of sheaves on $\Pj^1$:
\[
 0 \longrightarrow j_! \cH_{a,x} \longrightarrow j_* \cH_{a,x} \longrightarrow j_* \cH_{a,x} / j_! \cH_{a,x} \longrightarrow 0.
\]
Taking the long exact sequence, one has
\begin{equation}\label{long exact sequence}
     0 \longrightarrow H^0(\Pj^1_{\bar{k}}, j_! \cH_{a,x}) \longrightarrow H^0(\Pj^1_{\bar{k}}, j_* \cH_{a,x}) \longrightarrow H^0(\Pj^1_{\bar{k}}, j_* \cH_{a,x} / j_! \cH_{a,x}) \longrightarrow H^1(\Pj^1_{\bar{k}}, j_! \cH_{a,x}).
\end{equation}
By Lemma~\ref{lem:H2-vanish}
\[
H^0(\Pj^1_{\bar{k}}, j_* \cH_{a,x})=H^0({\Gm}_{\bar{k}}, \cH_{a,x})=0.
\]
The quotient sheaf $j_* \cH_{a,x} / j_! \cH_{a,x}$ is a skyscraper sheaf supported exactly at the boundary points $\{0, \infty\}$. Thus, we have 
\[
H^0(\Pj^1_{\bar{k}}, j_* \cH_{a,x} / j_! \cH_{a,x})=(\cH_{a,x})_{\bar{\eta}}^{I_0}\oplus(\cH_{a,x})_{\bar{\eta}}^{I_\infty},
\]
where $I_0$ (resp. $I_\infty$ ) is the inertial subgroup at $0$ (resp. $\infty$ ). Combining this with \eqref{long exact sequence}, we have a $\Frob_q$-equivariant injection
\begin{equation} \label{eq:injection}
(\cH_{a,x})_{\bar{\eta}}^{I_0}\oplus(\cH_{a,x})_{\bar{\eta}}^{I_\infty}\hookrightarrow H^1(\Pj^1_{\bar{k}}, j_! \cH_{a,x})\cong H^1_c({\Gm}_{\bar{k}}, \cH_{a,x}). 
\end{equation}
By \cite[Theorem 7.8]{SGA45} or \cite[Theorem 7.4.3]{KatzGKM}, $I_0$ acts unipotently on $\cK|_E$ with a single Jordan block and the geometric Frobenius $\Frob_{q^n}$ acts trivially on the rank-$1$ subspace $(\cK|_E)^{I_0}$. Multiplication by $x$ fixes the boundary point $0$, so the same assertion holds for $V=(\cF_x)_{\bar\eta}$. If $v\in V^{I_0}$ is a nonzero element, then 
\[
w:=v\otimes v\otimes\cdots\otimes v\in V \otimes V^F \otimes \cdots \otimes V^{F^{n-1}}=\TInd_{E/k}(V)
\]
is invariant under the action of $I_0$. Moreover, by \eqref{F permute} the action of $\Frob_q$ on $w$ is given by
\[
\Frob_q(w)=\Frob_{q^n}(v)\otimes v\otimes\cdots\otimes v=w.
\]
This gives a nonzero element of $(\cG_x)_{\bar{\eta}}^{I_0}$ with trivial $\Frob_q$-action. The same assertion holds for $\cH_{a,x}=\phi_r^*\cG_x\otimes\cL_{\psi_{-a}}$ because $\cL_{\psi_{-a}}$ is unramified at $0$. The result follows by \eqref{eq:injection}.
\end{proof}

\begin{proposition}\label{prop:S-swan-estimate}
For every $a\in k^{\times}$ and $x\in E^{\times}$, one has
\begin{equation}\label{eq:S-swan-estimate}
\left|S(E,r,x,a)-(-1)^r\right|\le \left(r^n-m_x(a)-1\right)q^{\frac{(r-1)n+1}{2}}.
\end{equation}
\end{proposition}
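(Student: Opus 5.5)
By \eqref{eq:GL trace formula} together with Lemma~\ref{lem:H2-vanish}, only $H^1_c$ contributes:
\[
S(E,r,x,a)=(-1)^{r}\operatorname{tr}\left(\Frob_q\mid H_c^1({\Gm}_{\bar{k}},\cH_{a,x})\right).
\]
The plan is to compute $\dim H^1_c$, to show that all of its weights except one are at most $(r-1)n+1$, and to identify the exceptional eigenvalue $1$ of Lemma~\ref{engenvalue 1}. Subtracting that eigenvalue accounts for the main term $(-1)^r$, and bounding the remaining eigenvalues by Deligne gives \eqref{eq:S-swan-estimate}.

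\emph{Dimension.} Apply the Grothendieck--Ogg--Shafarevich formula on ${\Gm}$, whose Euler characteristic is $0$:
\[
\chi_c({\Gm}_{\bar k},\cH_{a,x})=-\Swan_0(\cH_{a,x})-\Swan_\infty(\cH_{a,x}).
\]
The sheaf $\cH_{a,x}$ is tame at $0$, and Proposition~\ref{prop:swan} gives $\Swan_\infty=r^n-m_x(a)$. Since $H^0_c=H^2_c=0$, this yields $\dim H^1_c=r^n-m_x(a)$.

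\emph{Weights.} The sheaf $\cH_{a,x}$ is pure of weight $w=n(r-1)$. By Deligne (Weil II), $H^1_c$ is mixed of weights $\le w+1$, so every Frobenius eigenvalue $\alpha$ satisfies $|\alpha|\le q^{(w+1)/2}=q^{\frac{(r-1)n+1}{2}}$.

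\emph{Splitting off the eigenvalue $1$.} Lemma~\ref{engenvalue 1} provides one eigenvalue equal to $1$; more precisely, it comes from the vector $w=v\otimes\cdots\otimes v$ in the $I_0$-invariants. We must make sure this is a genuine eigenvalue of $H^1_c$ counted with multiplicity at least one, so that removing it leaves $r^n-m_x(a)-1$ eigenvalues, each bounded by $q^{\frac{(r-1)n+1}{2}}$. Then
\[
\left|(-1)^r S-1\right|\le (r^n-m_x(a)-1)\,q^{\frac{(r-1)n+1}{2}},
\]
and multiplying through by $(-1)^r$ gives \eqref{eq:S-swan-estimate}.

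The main obstacle is making sure that subtracting the single eigenvalue $1$ loses nothing: we need the injection \eqref{eq:injection} to produce an actual eigenvalue-$1$ line inside $H^1_c$, not merely an eigenvalue of some subquotient. Since the map in \eqref{eq:injection} is an injection into $H^1_c$ itself, this holds directly.

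There is also an edge case. If $r^n-m_x(a)$ were $0$, the existence of the eigenvalue $1$ would contradict $\dim H^1_c=0$. So the lemma implicitly forces $r^n-m_x(a)\ge1$, and the bound is consistent.
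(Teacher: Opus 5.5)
Your proposal is correct and follows the paper's proof step for step: kill $H^2_c$ via Lemma~\ref{lem:H2-vanish} to get $S(E,r,x,a)=(-1)^r\operatorname{tr}(\Frob_q\mid H^1_c)$, obtain $\dim H^1_c=r^n-m_x(a)$ from Grothendieck--Ogg--Shafarevich and Proposition~\ref{prop:swan}, bound the weights by $n(r-1)+1$ via Deligne, and split off the eigenvalue $1$ from Lemma~\ref{engenvalue 1} before applying the triangle inequality. Your worry about subquotients is unnecessary: an eigenvalue on any $\Frob_q$-stable subquotient of $H^1_c$ is already a root of the characteristic polynomial of $\Frob_q$ on $H^1_c$, so subtracting it from the trace removes exactly one eigenvalue either way.
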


\begin{proof}
By \eqref{eq:GL trace formula} and Lemma~\ref{lem:H2-vanish}, we have 
\[
S(E,r,x,a)=(-1)^r\operatorname{tr}\left(\Frob_q\mid H_c^1({\Gm}_{\bar{k}},\cH_{a,x})\right).
\]
Recall that the sheaf $\cH_{a,x}$ is lisse on $\Gm$, pure of weight $n(r-1)$ and tame at $0$. The Grothendieck-Ogg-Shafarevich formula on
$\Gm=\Pj^1\setminus\{0,\infty\}$ and \eqref{eq:swan-formula} gives
\[
\dim H_c^1({\Gm}_{\bar{k}},\cH_{a,x})=-\chi_c({\Gm}_{\bar{k}},\cH_{a,x})=\Swan_0(\cH_{a,x})+\Swan_\infty(\cH_{a,x})=r^n-m_x(a).
\]
Deligne's theorem gives that $H_c^1({\Gm}_{\bar{k}},\cH_{a,x})$ is mixed of weight $\le n(r-1)+1$. By Lemma~\ref{engenvalue 1}, the space $H_c^1({\Gm}_{\bar{k}},\cH_{a,x})$ admits $1$ as a $\Frob_q$-eigenvalue. Subtracting this eigenvalue and applying the triangle inequality give \eqref{eq:S-swan-estimate} and hence \eqref{eq:main}. This finishes the proof of Theorem~\ref{main thm}.
\end{proof}

\section{Counterexample}\label{counterexample}
The original estimate in \cite[Conjecture 4.3]{WanNormTrace} (in the field extension case) is
\begin{equation}\label{Wan-conj}
\left|\Nm(E,r,x,a)-\frac{(q^n-1)^{r-1}}{q}\right|
\le r q^{\frac{(r-1)n-1}{2}}.
\end{equation}

We present a counterexample to \eqref{Wan-conj}. Consider the quadratic extension $E = \mathbb{F}_{121} = \mathbb{F}_{11}(i)$ over the base field $k = \mathbb{F}_{11}$ with $i^2 = -1$. Take $r=2,~a = 3 \in \mathbb{F}_{11}^{\times}$ and $x = 2 + 5i \in \mathbb{F}_{121}^{\times}$. Then by \eqref{nbrxa}, the count is given exactly by
\[
\Nm_{2,2}(3, 2+5i) = \#\left\{ g \in E^{\times} \mid \Tr_{E/k}\left(g + \frac{2+5i}{g}\right) = 3 \right\}.
\]
Let $z = g + \frac{x}{g} = \alpha + \beta i \in E$ with $\alpha,\beta\in k$. Assume $\Tr_{E/k}(z) = 2\alpha = 3$. Then $\alpha =7$. For a fixed $z$, the number of roots of the equation $g^2 - zg + x = 0$ in $E^{\times}$ is $1 + \chi_E(z^2 - 4x)$, where $\chi_E$ denotes the quadratic character on $E$. Using $\chi_E(\cdot) = \chi_{11}\circ\Nr_{E/K}(\cdot)$, the total number of solutions is 
\[
\begin{aligned}
\Nm_{2,2}(3,2+5i) &= \sum_{\beta \in \mathbb{F}_{11}} \left[ 1 + \chi_E\left((7+\beta i)^2 - 4(2+5i)\right) \right]\\ 
&= 11 + \sum_{\beta \in \mathbb{F}_{11}} \chi_{11}(\beta^4 + 4\beta^2 + \beta + 2)=4.
\end{aligned}
\]

We claim $m_x(3)=0$. Otherwise, there are $z_0,z_1\in\bar{k}$ such that
$z_0^2=x,~z_1^2=x^{11}$ and 
$2(z_0+z_1)=3.$ 
Thus, we have $z_0=9+7i$, which implies that 
\[
z_0^2=10+5i\neq 2+5i=x.
\]
This is a contradiction. 

Subtracting the main term $\frac{|E^\times|}{q} = \frac{120}{11}$ gives an error $\left| 4 - \frac{120}{11} \right| = \frac{76}{11} \approx 6.909$. However, the error bound in \eqref{Wan-conj} is $2 \sqrt{11} \approx 6.633$. Hence, the estimate \eqref{eq:main} holds, while  \eqref{Wan-conj} does not. 

\section{Construction of the semisimple Kloosterman sheaf}
\label{sec:ss-regular-sheaf}

The final two sections are devoted to generalizing Theorem~\ref{main thm} to the semisimple case. The purpose of this section is to construct a lisse sheaf whose local trace gives the Fourier transform of the semisimple product-trace count. Throughout the final two sections, let $k=\F_q$ and $B$ be as in
\eqref{B}. Put $E_i=\F_{q^{n_i}}$ and let $\Nm(B,r,x,a)$ be as in
\eqref{nbrxa}.  Put
\[
 N=\dim_k B=\sum_{i=1}^{s}n_i d_{i}^2,
 \qquad M=\sum_{i=1}^{s}n_i d_i.
\]

For any finite field $F$, 
recall that an element $y\in\GL_d(F)$ is called {regular} if its minimal
polynomial has degree $d$, or equivalently, if its minimal polynomial is equal to its
characteristic polynomial. It is called regular elliptic if its characteristic polynomial is irreducible over $F$. An element
$x=(x_i)_i\in B^{\times}$ is regular if every $x_i$ is regular. This is also the
notion used in \cite{WanNormTrace} and \cite{ZelingherMatrix}.

For $y\in\GL_a(F)$ and $b\ge 1$, let
\begin{equation}\label{eq:ss-J-block}
J_b(y)=
\begin{pmatrix}
y&I_a&0&\cdots&0\\
0&y&I_a&\ddots&\vdots\\
\vdots&\ddots&\ddots&\ddots&0\\
0&\cdots&0&y&I_a\\
0&\cdots&\cdots&0&y
\end{pmatrix}\in\GL_{ab}(F).
\end{equation}

The following is a standard consequence of the rational canonical form for regular matrices over a finite field. We record it only to fix the notations for the primary data used below.

\begin{proposition}\label{prop:ss-primary-decomposition}
Let $F$ be a finite field, and let $y\in\GL_d(F)$ be regular. Then there exist pairwise distinct monic irreducible polynomials
$f_1,\cdots,f_t\in F[T]$, positive integers $b_1,\cdots,b_t$, and regular
elliptic elements $y_j\in\GL_{a_j}(F)$ with characteristic polynomial
$f_j$, such that
\begin{equation}\label{eq:ss-primary-decomposition} y\sim\operatorname{diag}\left(J_{b_1}(y_1),\cdots,J_{b_t}(y_t)\right), \qquad \sum_{j=1}^{t}a_j b_j=d.
\end{equation}
The unordered collection of pairs $(f_j,b_j)$ is uniquely determined by $y$.
\end{proposition}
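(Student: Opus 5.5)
The proof is the standard module-theoretic argument: regard $F^d$ as a module over the PID $F[T]$, with $T$ acting by $y$, and use the structure theorem together with regularity. Let $\chi_y=\prod_{j=1}^t f_j^{b_j}$ be the factorization of the characteristic polynomial into pairwise distinct monic irreducibles. Since $y$ is invertible, $T\nmid\chi_y$. Regularity says the minimal polynomial equals $\chi_y$, so the module is cyclic:
\[
F^d\cong F[T]/(\chi_y).
\]
By the Chinese remainder theorem this splits as
\[
F^d\cong\bigoplus_{j=1}^t F[T]/(f_j^{b_j}).
\]
This already shows that $y$ is conjugate to a block diagonal matrix with one block per primary component, and that $\sum_j a_j b_j=d$ with $a_j=\deg f_j$.

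The main step is to show that the primary cyclic module $F[T]/(f^b)$, with $\deg f=a$, has a basis in which $T$ acts by $J_b(y_0)$, where $y_0=C(f)$ is the companion matrix of $f$. The companion matrix is regular elliptic because its characteristic polynomial $f$ is irreducible.

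1. Write $K=F[T]/(f)$. Since $F$ is finite, hence perfect, $f$ is separable and $K/F$ is separable.
2. Hensel lifting (or the Cohen structure theorem) then gives a ring splitting $F[T]/(f^b)\cong K[u]/(u^b)$, where $u$ corresponds to $f(T)$. Equivalently, $F[T]/(f^b)$ contains a subfield $K'\cong K$ containing $F$.
3. Take the $F$-basis $u^{i}\beta_m$, $0\le i<b$, $1\le m\le a$, where $\beta_m$ is an $F$-basis of $K'$ chosen so that multiplication by the lift $\theta\in K'$ of $T\bmod f$ has matrix $y_0$.
4. The image of $T$ is not $\theta$ itself but $\theta+(\text{multiple of } u)$. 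One normalizes the $u$-coordinate, replacing $u$ by $T-\theta$, which is also a uniformizer. Then $T=\theta+u$.
5. In this basis, multiplication by $T$ sends $u^i\beta_m$ to $u^i\theta\beta_m+u^{i+1}\beta_m$. This is exactly the block matrix $J_b(y_0)$, up to ordering the blocks from the top power down.

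I expect the main obstacle to be Step 4: checking that $T-\theta$ generates the maximal ideal. This holds because $f(\theta)=0$ in $K'$ and $\theta$ is a simple root of $f$, so $f(T)=(T-\theta)\cdot(\text{unit})$. Everything else in the existence part is bookkeeping.

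For uniqueness, the pairs $(f_j,b_j)$ are read off from invariants of $y$. The $f_j$ are the irreducible factors of $\chi_y$, and $b_j$ is the exponent of $f_j$ in $\chi_y$. Equivalently, by the uniqueness in the elementary-divisor form of the structure theorem, $F[T]/(f_j^{b_j})$ are the primary components of the $F[T]$-module, and these depend only on the conjugacy class of $y$. In the displayed block form, $J_{b_j}(y_j)$ has characteristic polynomial $f_j^{b_j}$, so any decomposition as in \eqref{eq:ss-primary-decomposition} recovers the same multiset of pairs.
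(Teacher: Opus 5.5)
Your proof is correct and follows the route the paper intends. The paper gives no argument beyond calling this a standard consequence of the rational canonical form, meaning the structure theorem for the cyclic $F[T]$-module $F^d$ plus the primary (CRT) decomposition, with uniqueness from unique factorization of $\chi_y$, exactly as you do. What you add is the step the paper leaves implicit: converting the primary companion block $C(f^b)$ into $J_b(C(f))$ via a Hensel-lifted coefficient field. This step genuinely uses that $F$ is perfect (for inseparable $f$ and $b\ge 2$ the matrix $J_b(C(f))$ is not cyclic), and your check that $T-\theta$ is a uniformizer handles it correctly.
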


Fix a regular element $x=(x_i)\in B^{\times}$. For each $i$, assume that
\begin{equation}\label{eq:ss-x-primary}
x_i\sim\operatorname{diag}\left(
J_{b_{i1}}(y_{i1}),\cdots,J_{b_{i t_i}}(y_{i t_i})\right), \qquad \sum_{j=1}^{t_i}a_{ij}b_{ij}=d_i,
\end{equation}
where $y_{ij}\in\GL_{a_{ij}}(E_i)$ is regular elliptic. Choose one
eigenvalue 
\begin{equation}\label{eq:ss-xi}
\xi_{ij}\in\F_{q^{n_i a_{ij}}}^{\times}
\end{equation}
of $y_{ij}$ from the $E_i$-Frobenius orbit, and put
\begin{equation}\label{eq:ss-lambda-data}
\Lambda=\{(i,j)\mid 1\le i\le s,~1\le j\le t_i\},
\qquad f_{ij}=n_i a_{ij}.
\end{equation}
Define
\begin{equation}\label{eq:ss-rank}
R_{B,x}=\prod_{(i,j)\in\Lambda}\binom{b_{ij}+r-1}{b_{ij}}^{f_{ij}}.
\end{equation}
and
\begin{equation}\label{eq:ss-prefactors}
A_B=\frac{(r-1)(N-M)}{2}, \qquad \varepsilon_B=(-1)^{(r-1)\sum_{i=1}^{s}d_i}.
\end{equation}
The integer $A_B$ is integral because
$N-M=\sum_i n_i d_i(d_i-1)$ is even.

In the remainder of this section, we construct the sheaf needed to prove
Theorem~\ref{thm:ss-main}.  Its local monodromy and cohomology will be analyzed in Section~\ref{sec:ss-cohomology-proof}.

Fix a non-trivial additive character
\[
\psi:k\longrightarrow \Ql^{\times}.
\]
For $y=(y_i)_i\in B^{\times}$, define the product-trace Kloosterman sum over $B$ by
\begin{equation}\label{eq:ss-KB}
K(B,r,y,\psi)=\sum_{\substack{g_1,\cdots,g_r\in B^{\times}\\g_1\cdots g_r=y}}\psi\left(\Tr_{B/k}(g_1+\cdots+g_r)\right).
\end{equation}
This is a generalization of the hyper-Kloosterman sum in \eqref{eq:Kl-def}. As both the product condition and the reduced trace are componentwise, we have 
\begin{equation}\label{eq:ss-factorization}
K(B,r,y,\psi)=\prod_{i=1}^{s}K(M_{d_i}(E_i),r,y_i,\psi)=\prod_{i=1}^{s}K(M_{d_i}(E_i),r,y_i,\psi_i),
\end{equation}
where $\psi_i=\psi\circ\Tr_{E_i/k}$ and in $K(M_{d_i}(E_i),r,y_i,\psi_i)$ we view $M_{d_i}(E_i)$ as an algebra over $E_i$.

\begin{lemma}\label{lem:ss-fourier}
For $a\in k^{\times}$ and $x=(x_i)_i\in B^{\times}$, one has
\begin{equation}\label{eq:ss-fourier}
 \Nm(B,r,x,a)=\frac{|B^{\times}|^{r-1}}{q}+\frac{1}{q}S(B,r,x,a),
\end{equation}
where
\begin{equation}\label{eq:ss-SB}
 S(B,r,x,a)=\sum_{t\in k^{\times}}\psi(-at)K(B,r,t^r x,\psi).
\end{equation}
\end{lemma}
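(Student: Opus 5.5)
The plan is to copy the proof of Lemma~\ref{lem:fourier} verbatim, with $E$ replaced by $B$ and $\Tr_{E/k}$ replaced by the reduced trace $\Tr_B=\Tr_{B/k}$. First I parametrize the set in \eqref{nbrxa}. Given $(g_1,\cdots,g_{r-1})\in(B^{\times})^{r-1}$, there is a unique $g_r$ with $g_1\cdots g_r=x$, namely
\[
g_r=(g_1\cdots g_{r-1})^{-1}x,
\]
and it automatically lies in $B^{\times}$. Hence $\Nm(B,r,x,a)$ counts the $(r-1)$-tuples for which
\[
z=z(g_1,\cdots,g_{r-1})=g_1+\cdots+g_{r-1}+(g_1\cdots g_{r-1})^{-1}x
\]
satisfies $\Tr_B(z)=a$.

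Next I detect this condition by orthogonality of additive characters on $k$:
\[
1_{\{\Tr_B(z)=a\}}=\frac1q\sum_{t\in k}\psi\bigl(t(\Tr_B(z)-a)\bigr)=\frac1q\sum_{t\in k}\psi(-at)\,\psi\bigl(\Tr_B(tz)\bigr).
\]
The second equality uses only that $\Tr_B$ is $k$-linear. Summing over $(B^{\times})^{r-1}$, the term $t=0$ contributes $|B^{\times}|^{r-1}/q$.

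For $t\in k^{\times}$, I substitute $h_i=tg_i$ for $1\le i\le r-1$; this is a bijection of $(B^{\times})^{r-1}$. Since $t$ is a central scalar,
\[
t\,(g_1\cdots g_{r-1})^{-1}x=t^{r}(h_1\cdots h_{r-1})^{-1}x .
\]
So setting $h_r=(h_1\cdots h_{r-1})^{-1}t^rx$ gives $tz=h_1+\cdots+h_r$ with $h_1\cdots h_r=t^rx$. Therefore
\[
\sum_{g_1,\cdots,g_{r-1}}\psi\bigl(\Tr_B(tz)\bigr)=K(B,r,t^rx,\psi),
\]
using \eqref{eq:ss-KB}, where the constraint $h_1\cdots h_r=t^rx$ is exactly the parametrization above. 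Summing over $t\in k^{\times}$ gives \eqref{eq:ss-fourier} and \eqref{eq:ss-SB}.

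There is no real obstacle. The only point needing care is noncommutativity, which is harmless here: $t\in k$ is central in $B$, so $t^{r}$ can be pulled out of the product, and $\Tr_B$ is $k$-linear. Regularity of $x$ is not used.
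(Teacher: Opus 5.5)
Your proof is correct and follows the same route as the paper: orthogonality of additive characters on $k$, the $t=0$ term giving $|B^{\times}|^{r-1}/q$, and for $t\neq 0$ the rescaling $g_j\mapsto tg_j$, which is valid because $t$ is central in $B$ and $\Tr_B$ is $k$-linear. Your parametrization by $(g_1,\cdots,g_{r-1})$ and the check $t\,(g_1\cdots g_{r-1})^{-1}x=t^r(h_1\cdots h_{r-1})^{-1}x$ just make explicit what the paper's one-line argument leaves implicit.
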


\begin{proof}
The proof is similar to that of Lemma~\ref{lem:fourier}. Additive orthogonality applied to the condition in \eqref{nbrxa}
gives a sum over $t\in k$. The term $t=0$ is
$|B^{\times}|^{r-1}/q$. For $t\neq 0$, replacing each $g_j$ by $tg_j$, with
$t$ acting as a central scalar in every simple factor, changes the product
condition to $g_1\cdots g_r=t^r x$ and gives \eqref{eq:ss-SB}. 
\end{proof}

Now we introduce Zelingher's formula. Let $F=\F_{q'}$ be a finite extension of $\F_p$ and $\psi_F$ a non-trivial additive character of $F$.

\begin{theorem}[Zelingher]\label{thm:ss-zelingher}
Let $y\in\GL_d(F)$ be regular, and write
\[
y\sim\operatorname{diag}\left(J_{b_1}(y_1),\cdots,J_{b_t}(y_t)\right)
\]
as in Proposition~\ref{prop:ss-primary-decomposition}, where
$y_j\in\GL_{a_j}(F)$ is regular elliptic. Let $F_j=\F_{{q'}^{a_j}}$ and $\psi_{F_j}=\psi_F\circ\Tr_{F_j/F}$. Choose an eigenvalue $\eta_j\in F_j^\times$ of $y_j$. Denote by
$\cK_{r,\psi_{F_j}}$ the rank-$r$ Kloosterman sheaf over $F_j$ normalized as in \eqref{eq:Kl-sheaf-trace}. Then
\begin{equation}\label{eq:ss-zelingher}
K(M_d(F),r,y,\psi_F)=(-1)^{(r-1)d}{q'}^{(r-1)\binom d2}\prod_{j=1}^{t}
\operatorname{tr}\left(\Frob_{F_j,\eta_j}\mid
(\Sym^{b_j}\cK_{r,\psi_{F_j}})_{\bar{\eta}_j}\right).
\end{equation}
\end{theorem}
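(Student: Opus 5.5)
Since Theorem~\ref{thm:ss-zelingher} is quoted from \cite{ZelingherMatrix}, we only sketch how we would establish \eqref{eq:ss-zelingher} independently, by harmonic analysis on $G=\GL_d(F)$.

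\textbf{Step 1: reduce to a convolution of class functions.} Put $\Phi(g)=\psi_F(\operatorname{tr} g)$. Then $K(M_d(F),r,y,\psi_F)$ is the $r$-fold convolution $\Phi^{*r}(y)$. Since $\Phi$ is a class function, we expand it in irreducible characters $\chi_\pi$. Schur's lemma gives $\sum_{g}\Phi(g)\pi(g)=\tau(\pi)\cdot \mathrm{id}$, where $\tau(\pi)$ is a scalar (Kondo's matrix Gauss sum, normalized by $\dim\pi$). Hence
\[
\Phi^{*r}(y)=\frac{1}{|G|}\sum_{\pi}\dim\pi\cdot\left(\frac{\tau(\pi)}{\dim \pi}\right)^{r}\overline{\chi_\pi(y^{-1})}\cdot(\text{normalizing powers of }|G|),
\]
up to the routine bookkeeping of normalizations.

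\textbf{Step 2: evaluate the Gauss sums.} Kondo's theorem (the Eichler-type identity cited in the introduction) says the following. If $\pi$ has cuspidal support given by characters $\theta_1,\dots,\theta_m$ of $\F_{q'^{k_\nu}}^\times$ with $\sum k_\nu=d$, then $\tau(\pi)/\dim\pi$ equals $(-1)^{d}q'^{\binom d2}\prod_\nu\bigl(-g(\theta_\nu)\bigr)$, where $g(\theta_\nu)$ is the Gauss sum over $\F_{q'^{k_\nu}}$. Raising this to the $r$-th power produces the factor $(-1)^{(r-1)d}q'^{(r-1)\binom d2}$, together with $r$-th powers of one-variable Gauss sums.

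\textbf{Step 3: character values at regular elements and resummation.} We need $\chi_\pi(y)$ for $y$ regular with primary data $(y_j,b_j)$. By Deligne--Lusztig theory (or Green's formulas), the centralizer of a regular element is abelian, namely $\prod_j(\F_{q'^{a_j}}[u]/u^{b_j})^\times$. The value $\chi_\pi(y)$ is then a sum over embeddings of the cuspidal support into the tori attached to the blocks. Each block $J_{b_j}(y_j)$ contributes a sum over multisets of size $b_j$ of characters $\theta$ of $F_j^\times$, each evaluated at $\eta_j$, and unipotent parts contribute trivially for regular elements. We then interchange the sums. For each block, the resulting expression is
\[
\sum_{\text{multisets }\{\theta_1,\dots,\theta_{b_j}\}}\ \prod_{\nu}\frac{g(\theta_\nu)^r}{|F_j^\times|}\,\theta_\nu(\eta_j).
\]
By the multiplicative Fourier expansion of the hyper-Kloosterman sum, $\Kl_r(\eta)=\frac{1}{|F^\times|}\sum_\theta g(\theta)^r\bar\theta(\eta)$, this becomes the complete homogeneous symmetric polynomial of degree $b_j$ in the Frobenius eigenvalues of $\cK_{r,\psi_{F_j}}$ at $\eta_j$. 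That is exactly the trace on $\Sym^{b_j}$. The Hasse--Davenport relation handles characters that factor through norms, i.e.\ cuspidal pieces of degree smaller than $a_j$, so that everything is expressed over $F_j$.

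\textbf{Main obstacle.} We expect the main difficulty to be Step 3: getting the character formula for regular non-semisimple elements exactly right, with its signs and multiplicities, and showing that the combinatorics of cuspidal supports matches the multisets in $\Sym^{b_j}$ without extra terms. Our first attempt would be the case $d=b$, $y=J_b(\eta)$ with $\eta\in F^\times$, where only principal-series-type supports appear. Once this case is checked, we would pass to the general case by the multiplicativity of the answer over the primary blocks.
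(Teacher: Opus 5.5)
The paper gives no argument of its own here: it quotes \cite[Corollary~4.7]{ZelingherMatrix} with trivial multiplicative characters. Your harmonic-analysis route could give an independent proof, and Steps 1--2 are fine up to normalization. With $\tau(\pi)$ the Schur scalar, one has exactly $\Phi^{*r}(y)=|G|^{-1}\sum_\pi\dim\pi\,\tau(\pi)^r\chi_\pi(y^{-1})$. The formula you use for $\tau(\pi)$ is Kondo's theorem, not Eichler's formula, which only covers $\chi\circ\det$.

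The genuine gap is Step 3, which carries all the content and is wrong as formulated. Already for $y=J_b(\eta)$ with $\eta\in F^\times$, it is false that only principal-series supports occur. Since $\eta$ is central and Green functions equal $1$ at regular unipotent elements, $R_T^\theta(\eta u)=\theta(\eta)$ for all $(T,\theta)$. Hence for \emph{every} cuspidal support $\sigma$ (Frobenius orbits $\nu$ of degree $k_\nu$ with multiplicity $m_\nu$, $\sum_\nu m_\nu k_\nu=b$), exactly one $\pi_\sigma$ is nonzero at $\eta u$, namely the one with trivial label on each $S_{m_\nu}$. Its value is $(-1)^{\sum_\nu m_\nu(k_\nu-1)}\omega_{\pi_\sigma}(\eta)$ and its degree is $|G|_{p'}/\prod_\nu|\GL_{m_\nu}(\F_{{q'}^{k_\nu}})|_{p'}$. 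For instance, the cuspidal $\pi_\nu$ of $\GL_2(F)$ has value $-\nu(\eta)$ at $J_2(\eta)$. So $\sigma$ contributes $(-1)^{(r-1)b}{q'}^{(r-1)\binom{b}{2}}\prod_\nu(-c_\nu)^{m_\nu}/\prod_{i=1}^{m_\nu}({q'}^{k_\nu i}-1)$, with $c_\nu=(-g(\theta_\nu))^r\overline{\theta_\nu}(\eta)$, which is not of your product form.

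Moreover, your multiset sum is the complete symmetric function of degree $b$ in the ${q'}-1$ Fourier coefficients $x_\theta=g(\theta)^r\theta(\eta)/({q'}-1)$. It is not the complete symmetric function in the $r$ Frobenius eigenvalues $\alpha_i$ of $\cK$ at $\eta$; the Fourier expansion of $\Kl_r$ only matches first power sums. For $b=2$, $\sum_\theta x_\theta^2$ has denominator $({q'}-1)^2$ and sees only characters of $F^\times$. By contrast, $p_2(\alpha)=(-1)^{r-1}\Kl_{r,\F_{{q'}^2}}(\eta)$ has denominator ${q'}^2-1$ and involves the characters of $\F_{{q'}^2}^\times$ not factoring through the norm. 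These are precisely what the cuspidal terms supply.

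The missing idea is to match power sums rather than multisets. Expand $p_k(\alpha)=(-1)^{r-1}\Kl_{r,\F_{{q'}^k}}(\eta)$ in characters of $\F_{{q'}^k}^\times$, and group them into Frobenius orbits $\nu$ with $k_\nu\mid k$ via Hasse--Davenport. This gives $p_k(\alpha)=-({q'}^k-1)^{-1}\sum_{k_\nu\mid k}k_\nu c_\nu^{k/k_\nu}$. Exponentiating with Euler's identity $\prod_{i\ge1}(1-zQ^{-i})=\sum_{m\ge0}(-z)^m/\prod_{i=1}^m(Q^i-1)$ yields
\[
\prod_{i=1}^r(1-\alpha_iu)^{-1}=\prod_\nu\sum_{m\ge0}\frac{(-c_\nu u^{k_\nu})^m}{\prod_{i=1}^m({q'}^{k_\nu i}-1)},
\]
whose $u^b$-coefficient is exactly the representation-theoretic sum above. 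This closes the case $y=J_b(\eta)$; for $b=2$ it reads $K=\tfrac12{q'}^{r-1}\bigl(\Kl_r(\eta)^2+(-1)^{r-1}\Kl_{r,\F_{{q'}^2}}(\eta)\bigr)$.

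Finally, the passage to general $y$ is not a formal multiplicativity over primary blocks. $K$ is not multiplicative under block sums: for $\eta_1\ne\eta_2$ one has $K(\operatorname{diag}(\eta_1,\eta_2))={q'}^{r-1}\Kl_r(\eta_1)\Kl_r(\eta_2)$. Elliptic blocks also require converting $F$-level Gauss sums into $F_j$-level ones. Both must come out of the same Deligne--Lusztig computation localized to $C(s)=\prod_j\GL_{b_j}(F_j)$, run with the corrected identity.
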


\begin{proof}
This is \cite[Corollary~4.7]{ZelingherMatrix}, specialized to trivial multiplicative characters. 
\end{proof}

Recall that $\cK=\cK_{r,\psi}$ is the rank $r$ Kloosterman sheaf on ${\Gm}_k$. For $\lambda=(i,j)\in\Lambda$, abbreviate
\begin{equation}\label{eq:ss-abbreviations}
f_{\lambda}=n_i a_{ij},\qquad
b_{\lambda}=b_{ij},\qquad 
\xi_{\lambda}=\xi_{ij},\qquad
L_{\lambda}=\F_{q^{f_{\lambda}}}.
\end{equation}
Let $[\xi_{\lambda}]:{\Gm}_{L_{\lambda}}\to{\Gm}_{L_{\lambda}}$ denote multiplication
by $\xi_{\lambda}$, and put
\begin{equation}\label{eq:ss-factor-sheaf}
\cF_{\lambda}=[\xi_{\lambda}]^*
 \Sym^{b_{\lambda}}(\cK|_{L_{\lambda}}),
\end{equation}
\begin{equation}\label{eq:ss-M-sheaf}
\mathcal{M}_{B,x}=\bigotimes_{\lambda\in\Lambda}\TInd_{L_{\lambda}/k}(\cF_{\lambda})
 \qquad\text{on }{\Gm}_k.
\end{equation}
For the construction of the tensor induction, see Section~\ref{sec:tensor-induction}. The same argument as in Lemma~\ref{tensor induction decomposition} gives
\begin{equation}\label{eq:ss-geometric-decomposition}
\mathcal{M}_{B,x}|_{\bar{k}}\cong
\bigotimes_{\lambda\in\Lambda}\bigotimes_{h=0}^{f_{\lambda}-1}\Sym^{b_{\lambda}}\cK_{\xi_{\lambda}^{q^h}}.
\end{equation}
Here $\cK_c=[T\mapsto cT]^*\cK|_{\bar{k}}$ for $c\in \bar{k}$.

\begin{proposition}\label{prop:ss-M-properties}
The sheaf $\mathcal{M}_{B,x}$ is lisse of rank $R_{B,x}$, pure of weight
$(r-1)M$, and tame at $0$. Moreover, for every $t\in k^{\times}$, one has
\begin{equation}\label{eq:ss-K-trace}
K(B,r,tx,\psi)=\varepsilon_Bq^{A_B}
\operatorname{tr}\left(\Frob_{k,t}\mid(\mathcal{M}_{B,x})_{\bar{t}}\right).
\end{equation}
\end{proposition}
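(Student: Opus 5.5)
The argument splits into the geometric properties (rank, weight, tameness), which we read off \eqref{eq:ss-geometric-decomposition}, and the trace identity \eqref{eq:ss-K-trace}, which comes from \eqref{eq:ss-factorization}, Zelingher's formula and the tensor-induction trace computation of Lemma~\ref{lem:Gx-basic}.

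\emph{Geometric properties.} Each $\Sym^{b}\cK_c$ is lisse of rank $\binom{b+r-1}{b}$, pure of weight $b(r-1)$, and tame at $0$. Lisseness and tameness at $0$ hold because $\cK$ has both and the translation $T\mapsto cT$ fixes $0$; purity holds because symmetric powers of a pure sheaf are pure. Taking the tensor product over $\lambda\in\Lambda$ and $0\le h\le f_\lambda-1$ in \eqref{eq:ss-geometric-decomposition} gives rank $\prod_\lambda\binom{b_\lambda+r-1}{b_\lambda}^{f_\lambda}=R_{B,x}$. The weight is
\[
(r-1)\sum_\lambda f_\lambda b_\lambda=(r-1)\sum_i n_i\sum_j a_{ij}b_{ij}=(r-1)\sum_i n_id_i=(r-1)M,
\]
using \eqref{eq:ss-x-primary}. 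Lisseness and tameness at $0$ on $\Gm_k$ follow from the corresponding properties over $\bar k$.

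\emph{Trace formula.} Fix $t\in k^\times$. First, $tx_i$ is regular with primary decomposition $\operatorname{diag}(J_{b_{ij}}(ty_{ij}))$, since $J_b(ty)$ is conjugate to $tJ_b(y)$ by scaling the block rows by powers of $t$. The element $ty_{ij}$ is regular elliptic with eigenvalue $t\xi_{ij}$. Applying Theorem~\ref{thm:ss-zelingher} to each factor of \eqref{eq:ss-factorization}, with $F=E_i$, $q'=q^{n_i}$, $F_j=L_{(i,j)}$ and $\psi_{F_j}=\psi\circ\Tr_{L_\lambda/k}$, gives
\[
K(B,r,tx,\psi)=\prod_i(-1)^{(r-1)d_i}q^{n_i(r-1)\binom{d_i}{2}}\prod_j \operatorname{tr}\bigl(\Frob_{L_\lambda,t\xi_\lambda}\mid \Sym^{b_\lambda}\cK\bigr).
\]
Here we use that $\cK_{r,\psi_{F_j}}=\cK|_{L_\lambda}$, by the compatibility of Kloosterman sheaves with base change encoded in \eqref{eq:Kl-sheaf-trace}. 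The prefactor is $\varepsilon_B q^{A_B}$, because
\[
\sum_i n_i\binom{d_i}{2}(r-1)=\frac{(r-1)(N-M)}{2}.
\]
Each remaining factor equals $\operatorname{tr}(\Frob_{L_\lambda,t}\mid \cF_\lambda)$ by the definition \eqref{eq:ss-factor-sheaf}. By the argument of Lemma~\ref{lem:Gx-basic}, based on \eqref{trace of tensor product} and the choice of a lift of $\Frob_{k,t}$ that also lifts the generator of $\Gal(L_\lambda/k)$, this equals $\operatorname{tr}(\Frob_{k,t}\mid \TInd_{L_\lambda/k}\cF_\lambda)$. Frobenius traces are multiplicative on tensor products, so \eqref{eq:ss-K-trace} follows.

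\emph{Expected obstacle.} The delicate point is the normalization bookkeeping. We must check that Zelingher's Kloosterman sheaf over $F_j$, with character $\psi_F\circ\Tr_{F_j/F}=\psi\circ\Tr_{L_\lambda/k}$, is exactly $\cK|_{L_\lambda}$ with the sign convention of \eqref{eq:Kl-sheaf-trace}. We must also check that the sign and power of $q$ in \eqref{eq:ss-zelingher} combine over $i$ to $\varepsilon_Bq^{A_B}$, and that the answer does not depend on the choice of eigenvalue $\xi_{ij}$ within its Frobenius orbit.
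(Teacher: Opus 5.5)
Your proposal is correct and follows essentially the same route as the paper. Both read off rank, weight and tameness factor by factor, then get the trace identity from the tensor-induction trace formula, the definition of $\cF_\lambda$, Zelingher's formula applied to $tx_i$ with eigenvalue $t\xi_\lambda$ (using $tJ_b(y)\sim J_b(ty)$), and the factorization \eqref{eq:ss-factorization}. Your concern about independence of the choice of $\xi_{ij}$ is unnecessary here: Zelingher's formula holds for any chosen eigenvalue, so you may simply take $t\xi_\lambda$.
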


\begin{proof}
The rank of $\Sym^{b_{\lambda}}\cK$ is
$\binom{b_{\lambda}+r-1}{b_{\lambda}}$, so the
definition \eqref{eq:ss-M-sheaf} gives the rank
\eqref{eq:ss-rank}. Since $\cK$ is pure of weight $r-1$, the factor in \eqref{eq:ss-M-sheaf} indexed
by $\lambda$ has weight $f_{\lambda} b_{\lambda}(r-1)$, and the total weight of $\mathcal{M}_{B,x}$ is
therefore
\[
 (r-1)\sum_{i,j}n_i a_{ij}b_{ij}=(r-1)\sum_{i=1}^s n_i d_i = (r-1)M.
\]
Tameness at $0$ follows from the corresponding property of $\cK$.

For $t\in k^{\times}$, we have
\begin{align*}
\operatorname{tr}(\Frob_{k,t}\mid(\mathcal{M}_{B,x})_{\bar{t}}) &= \prod_{\lambda\in\Lambda}
\operatorname{tr}(\Frob_{k,t}\mid(\TInd_{L_{\lambda}/k}(\cF_{\lambda}))_{\bar{t}})\\
&=\prod_{\lambda\in\Lambda}
\operatorname{tr}(\Frob_{k,t}^{f_{\lambda}}\mid(\cF_{\lambda})_{\bar{t}})\\
&=\prod_{\lambda\in\Lambda}\operatorname{tr}(\Frob_{L_{\lambda},t\xi_{\lambda}}\mid(\Sym^{b_{\lambda}}(\cK|_{L_{\lambda}}))_{\bar{t\xi_{\lambda}}})\\
&=\prod_{i=1}^s (-1)^{(r-1)d_i}q^{-(r-1)n_i \binom{d_i}{2}} K(M_{d_i}(E_i),r,tx_i,\psi_i)\\
&= \varepsilon_{B}^{-1} q^{-A_B}K(B,r,tx,\psi).
\end{align*}
Here the first equality is due to \eqref{eq:ss-M-sheaf}. The second equality is due to the property of tensor induction; see \eqref{trace of tensor product} and the proof of Lemma~\ref{lem:Gx-basic}. The third equality is due to \eqref{eq:ss-factor-sheaf}. The fourth equality is due to Theorem~\ref{thm:ss-zelingher}, taking $F=E_i$, $\psi_F=\psi_i$, $y=tx_i$ and $F_j=L_{\lambda}$. Note that the matrix $tJ_{b_{\lambda}}(y_{\lambda})$ is conjugate to $J_{b_{\lambda}}(ty_{\lambda})$ by $\operatorname{diag}(I_{a_{\lambda}},t^{-1}I_{a_{\lambda}},\cdots,t^{-(b_{\lambda}-1)}I_{a_{\lambda}})$ and therefore $t x_i$ has the
same primary block sizes as $x_i$ and the eigenvalue associated with $t x_i$ in \eqref{eq:ss-xi} can be taken to be $t \xi_{\lambda}$.
The last equality is due to \eqref{eq:ss-prefactors} and \eqref{eq:ss-factorization}. This proves \eqref{eq:ss-K-trace}.
\end{proof}

Finally, for $a\in k^{\times}$, define
\begin{equation}\label{eq:ss-H-sheaf}
\cH_{B,x,a}=\phi_r^*\mathcal{M}_{B,x}\otimes\cL_{\psi_{-a}},
\end{equation}
where $\phi_r$ is the $r$-th power morphism on ${\Gm}_k$. By Proposition~\ref{prop:ss-M-properties} the sheaf $\cH_{B,x,a}$ is lisse on ${\Gm}_k$ of rank $R_{B,x}$, pure of weight $(r-1)M$ and tame at $0$. Combining \eqref{eq:ss-SB} and \eqref{eq:ss-K-trace} with Grothendieck-Lefschetz trace formula, we obtain
\begin{equation}\label{eq:ss-sum-as-traces}
S(B,r,x,a)=\varepsilon_Bq^{A_B}\sum_{t\in k^{\times}}\operatorname{tr}\left(\Frob_{k,t}\mid(\cH_{B,x,a})_{\bar{t}}\right)=\varepsilon_Bq^{A_B}\sum_{i=1}^2 (-1)^i \operatorname{tr}\left(\Frob_q\mid H_c^i({\Gm}_{\bar{k}},\cH_{B,x,a})\right).
\end{equation}

\section{Cohomology and proof of the semisimple estimate}
\label{sec:ss-cohomology-proof}

We now analyze the sheaf $\cH_{B,x,a}$ constructed in
\eqref{eq:ss-H-sheaf}. The proof proceeds in four steps. We first compute its Swan conductor at infinity. We then use product monodromy to prove the
vanishing of $H_c^2$. Next, we isolate a Frobenius eigenvalue equal to $1$ coming from the boundary point $0$. Finally, Deligne's theorem and the
Grothendieck-Ogg-Shafarevich formula give Theorem~\ref{thm:ss-main}. We use the notation in Section~\ref{sec:ss-regular-sheaf}.

We define the cancellation multiplicity at infinity. If $p\mid r$, put
\begin{equation}\label{eq:ss-m-p-divides-r}
 m_{B,x}(a)=0.
\end{equation}
Assume $p\nmid r$. For every $\lambda\in\Lambda$ and
$0\leq h<f_{\lambda}$, consider nonnegative integers
\begin{equation}\label{eq:ss-m-family}
m_{\lambda,h,z}\ge 0\qquad (z^r=\xi_{\lambda}^{q^h})
\end{equation}
subject to
\begin{equation}\label{eq:ss-weak-composition}
\sum_{z^r=\xi_{\lambda}^{q^h}}m_{\lambda,h,z}=b_{\lambda}.
\end{equation}
Then $m_{B,x}(a)$ is the number of all such families satisfying
\begin{equation}\label{eq:ss-cancellation}
r\sum_{\lambda\in\Lambda}\sum_{h=0}^{f_{\lambda}-1}
 \sum_{z^r=\xi_{\lambda}^{q^h}}m_{\lambda,h,z}z=a.
\end{equation}
The definition is independent of the choices of the representatives $\xi_{\lambda}$ in \eqref{eq:ss-xi} because replacing $\xi_{\lambda}$ by $\xi_{\lambda}^{q^u} ~(1\le u\le a_{\lambda}$) cyclically permutes the multiset $\{\xi_{\lambda}^{q^h}\mid0\le h<f_{\lambda}\}$.

\begin{proposition}\label{prop:ss-swan}
For regular $x\in B^{\times}$ and $a\in k^{\times}$, one has
\begin{equation}\label{eq:ss-swan-formula}
\Swan_\infty(\cH_{B,x,a})=R_{B,x}-m_{B,x}(a).
\end{equation}
\end{proposition}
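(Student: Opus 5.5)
We follow the proof of Proposition~\ref{prop:swan} closely, splitting according to whether $p$ divides $r$.

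\textbf{Case $p\mid r$.} Write $r=p^s r_0$ with $s\ge1$.

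\emph{Bounding the breaks of $\mathcal{M}_{B,x}$.} By \eqref{eq:ss-geometric-decomposition}, $\mathcal{M}_{B,x}|_{\bar k}$ is a tensor product of sheaves $\Sym^{b_\lambda}\cK_{c}$. Each $\Sym^{b_\lambda}\cK_c$ is a subquotient of $\cK_c^{\otimes b_\lambda}$. All upper breaks of $\cK_c$ at $\infty$ equal $1/r$. Breaks of a tensor product are bounded by the maximum of the breaks of the factors, so every upper break of $\mathcal{M}_{B,x}$ at $\infty$ is at most $1/r$.

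\emph{Effect of the pullback.} Pullback by $\phi_{r_0}$ multiplies the breaks by $r_0$ \cite[1.13.1]{KatzGKM}. Pullback by $\phi_{p^s}$ preserves them, by Lemma~\ref{lem:pure-insep}. Hence all breaks of $\phi_r^*\mathcal{M}_{B,x}$ are at most $1/p^s<1$.

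\emph{Conclusion.} Tensoring with $\cL_{\psi_{-a}}$, whose only break is $1$, makes every break exactly $1$. Therefore $\Swan_\infty(\cH_{B,x,a})$ equals the rank $R_{B,x}$, which agrees with \eqref{eq:ss-m-p-divides-r}.

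\textbf{Case $p\nmid r$.} Here we compute the restriction to $P_\infty$ explicitly.

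\emph{One factor.} From \eqref{eq:Katz-local-infty}, translated by $c$, we get
\[
\phi_r^*\cK_c|_{P_\infty}\cong\bigoplus_{z^r=c}\cL_{\psi_{rz}}.
\]
Symmetric powers commute with pullback and with restriction to $P_\infty$. Since $\Sym^b$ of a direct sum of characters is the direct sum over monomials, we obtain
\[
\phi_r^*\Sym^{b}\cK_c|_{P_\infty}\cong\bigoplus_{(m_z)}\cL_{\psi_{r\sum_z m_z z}},
\]
where $(m_z)$ runs over the weak compositions $\sum_{z^r=c}m_z=b$. Here we use $\cL_{\psi_\alpha}\otimes\cL_{\psi_\beta}\cong\cL_{\psi_{\alpha+\beta}}$.

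\emph{The full sheaf.} Tensoring over all $(\lambda,h)$ with $c=\xi_\lambda^{q^h}$, and then with $\cL_{\psi_{-a}}$, gives
\[
\cH_{B,x,a}|_{P_\infty}\cong\bigoplus_{(m_{\lambda,h,z})}\cL_{\psi_{r\sum m_{\lambda,h,z}z-a}},
\]
where the sum runs over the families \eqref{eq:ss-m-family} satisfying \eqref{eq:ss-weak-composition}. The number of summands is $R_{B,x}$.

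\emph{Counting.} Each summand with nonzero coefficient has Swan conductor $1$. A summand with coefficient $0$ is trivial on $P_\infty$ and contributes $0$. The families with coefficient $0$ are exactly those counted by $m_{B,x}(a)$, by \eqref{eq:ss-cancellation}. Hence $\Swan_\infty(\cH_{B,x,a})=R_{B,x}-m_{B,x}(a)$.

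The only point needing care is the passage from \eqref{eq:ss-geometric-decomposition}, which is a geometric statement over $\bar k$, to the local statement at $\infty$. This step is fine because $\Swan_\infty$ is a geometric invariant. One also has to check that taking symmetric powers commutes with restriction to $P_\infty$, which holds since restriction is a tensor functor. I do not expect a serious obstacle here.
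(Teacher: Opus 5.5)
Your proposal is correct and matches the paper's proof. For $p\mid r$ you spell out the break-domination argument that the paper only cites from Proposition~\ref{prop:swan}. For $p\nmid r$ you expand $\cH_{B,x,a}|_{P_\infty}$ via \eqref{eq:Katz-local-infty} and symmetric powers of a sum of Artin--Schreier characters, then count the summands with zero parameter, exactly as the paper does.
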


\begin{proof}
First, assume that $p\mid r$. The same argument as in Proposition~\ref{prop:swan} shows that every upper break of $H_{B,x,a}$ is dominated by the Artin-Schreier sheaf $\cL_{\psi_{-a}}$ and is exactly $1$. In this case \eqref{eq:ss-swan-formula} follows from \eqref{eq:ss-m-p-divides-r}.

Now assume that $p\nmid r$. By \eqref{eq:Katz-local-infty}, \eqref{eq:ss-H-sheaf} and \eqref{eq:ss-geometric-decomposition} we have
\begin{align*}
H_{B,x,a}|_{P_\infty}&\cong\cL_{\psi_{-a}}\otimes
\bigotimes_{\lambda\in\Lambda}\bigotimes_{h=0}^{f_{\lambda}-1}
\phi_r^*(\Sym^{b_{\lambda}}\cK_{\xi_{\lambda}^{q^h}})|_{P_\infty}\\
&\cong\cL_{\psi_{-a}}\otimes\bigotimes_{\lambda\in\Lambda}
\bigotimes_{h=0}^{f_{\lambda}-1}
\Sym^{b_{\lambda}}\left(\bigoplus_{z^r=\xi_{\lambda}^{q^h}}\cL_{\psi_{rz}}\right)\\
&\cong\cL_{\psi_{-a}}\otimes
\bigotimes_{\lambda\in\Lambda}
\bigotimes_{h=0}^{f_{\lambda}-1}\bigoplus_{\substack{m_z\ge 0,~z^r=\xi_{\lambda}^{q^h}\\
\sum_z m_z=b_{\lambda}}}
\cL_{\psi_{r\sum_z m_z z}}.
\end{align*}
The formula above contains exactly $R_{B,x}$ rank-$1$ summands, and each summand indexed by a family
\eqref{eq:ss-m-family} has Artin-Schreier parameter
\[
r\sum_{\lambda,h,z}m_{\lambda,h,z}z-a.
\]
It is tame precisely when this parameter vanishes. The result follows by the definition of $m_{B,x}(a)$.
\end{proof}

\begin{proposition}\label{prop:ss-vanishing}
For regular $x\in B^{\times}$ and $a\in k^{\times}$, one has
\[
H^0({\Gm}_{\bar{k}},\cH_{B,x,a})=0, \quad H_c^2({\Gm}_{\bar{k}},\cH_{B,x,a})=0.
\]
\end{proposition}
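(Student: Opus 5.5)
We follow the argument of Lemma~\ref{lem:H2-vanish}, with Kloosterman sheaves replaced by their symmetric powers, and split according to whether $p$ divides $r$.

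\textbf{Case $p\mid r$.} The proof of Proposition~\ref{prop:ss-swan} shows that every upper break of $\cH_{B,x,a}$ at $\infty$ equals $1$. Hence both the $P_\infty$-invariants and the $P_\infty$-coinvariants of $(\cH_{B,x,a})_{\bar\eta}$ vanish. This gives $H^0=0$, and $H_c^2=0$ by the coinvariant description \eqref{eq:geom coinv}.

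\textbf{Case $p\nmid r$.} By \eqref{eq:geom coinv} and its dual, it suffices to show that $\phi_r^*\mathcal M_{B,x}|_{\bar k}$ has no rank-$1$ subquotient geometrically isomorphic to $\cL_{\psi_{a}}$ or $\cL_{\psi_{-a}}$. Both of these are geometrically nontrivial since $a\neq 0$. By \eqref{eq:ss-geometric-decomposition}, the geometric monodromy representation of $\mathcal M_{B,x}$ factors as
\[
\pi_1^{\geom}\xrightarrow{\ \Delta\ }\prod_{(\lambda,h)}G_{\geom}\longrightarrow\GL\Bigl(\bigotimes_{\lambda,h}\Sym^{b_\lambda}\Ql^r\Bigr),
\]
where $\Delta$ is built from the monodromy representations $\varphi_{\lambda,h}$ of $\cK_{\xi_\lambda^{q^h}}$. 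The plan has three steps.

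\emph{Step 1: the index classes.} Define $(\lambda,h)\sim(\lambda',h')$ if $\xi_\lambda^{q^h}=\xi_{\lambda'}^{q^{h'}}$ or $\xi_\lambda^{q^h}=(-1)^r\xi_{\lambda'}^{q^{h'}}$. By Lemma~\ref{lem:twists-duals}, within one class the $\varphi$'s agree up to isomorphism or duality. Across different classes they are neither isomorphic nor dual, even up to a twist. Choose a set $R$ of representatives for the classes.

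\emph{Step 2: product monodromy.} Apply Theorem~\ref{GKR} exactly as in Lemma~\ref{zariski closure}. This shows that the Zariski closure of the image of $\pi_1^{\geom}$ in $\prod_{R}G_{\geom}$ is the whole product. The key inputs are that each $G_\geom$ is connected with simple Lie algebra (Theorem~\ref{global monodromy}) and that standard representations are Goursat-adapted. Nothing here depends on the symmetric powers: we work at the level of the standard representations and apply $\Sym^{b_\lambda}$ only afterwards. We do not need regularity of $x$ or distinctness of the $\xi$'s; repeated or dual factors simply land in the same class.

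\emph{Step 3: passing to $\phi_r^*$ and finishing.} Pulling back by $\phi_r$ replaces $\pi_1^{\geom}$ by a finite-index subgroup. Since $\prod_R G_\geom$ is connected, the Zariski closure of the image is still the whole product. The representation of $\phi_r^*\mathcal M_{B,x}$ is therefore a restriction of an algebraic representation of the connected semisimple group $\prod_R G_\geom$. Every one-dimensional subquotient of it is a character of this group, hence trivial, since a semisimple group has no nontrivial characters. The sheaves $\cL_{\psi_{\pm a}}$ are geometrically nontrivial, so neither can occur. This yields both vanishings.

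\textbf{Main obstacle.} The only delicate point is Step 2: checking that the Goursat--Kolchin--Ribet hypotheses hold for the chosen representatives when the $\xi_\lambda^{q^h}$ come from several different $\lambda$ and may coincide. That is why we apply the criterion to the standard representations of the $\cK_c$ before taking symmetric powers, and collapse coincident or dual factors through the equivalence relation of Step 1.
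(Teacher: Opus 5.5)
Your proposal is correct and follows the paper's proof. For $p\mid r$ you use that every upper break at $\infty$ equals $1$. For $p\nmid r$ you apply Theorem~\ref{GKR} to the standard representations of the $\cK_{\xi_\lambda^{q^h}}$, after collapsing indices into the isomorphism/duality classes given by Lemma~\ref{lem:twists-duals}. This makes the monodromy of $\phi_r^*\mathcal{M}_{B,x}$ Zariski dense in a connected semisimple product, so every rank-$1$ subquotient is geometrically trivial and cannot be $\cL_{\psi_a}$.

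Your write-up is more explicit than the paper's sketch, notably in grouping indices $(\lambda,h)$ across different $\lambda$ and applying $\Sym^{b_\lambda}$ only after establishing product monodromy. Checking $\cL_{\psi_{-a}}$ as well is harmless but unnecessary, since both vanishings reduce to excluding $\cL_{\psi_a}$.
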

\begin{proof}
If $p\mid r$, the proof of Proposition~\ref{prop:ss-swan} shows that every upper break of
$\cH_{B,x,a}$ at infinity is $1$. Thus, both the $P_\infty$-invariant and the $P_\infty$-coinvariant of the geometric representation are zero, which gives $H^0=0$ and $H_c^2=0$.

Now assume $p\nmid r$. It suffices to show that every rank-$1$ sub-quotient of $\phi_r^*\mathcal{M}_{B,x}$ is geometrically constant. The argument is the same as in Section~\ref{sec:H2}. The geometric monodromy representation of $\phi_r^*\mathcal{M}_{B,x}$ factors through a product of certain copies of the geometric monodromy group $G_\geom$ appearing in Theorem~\ref{global monodromy}, which by Theorem~\ref{GKR} is the Zariski closure of the image of $\pi_1^{\geom}$; see Lemma~\ref{zariski closure} and the discussion above it. Therefore, any one-dimensional sub-quotient of the geometric monodromy representation of $\phi_r^*\mathcal{M}_{B,x}$ is a character of this product of certain copies of $G_\geom$, which must be trivial.
\end{proof}

\begin{proposition}\label{prop:ss-boundary-eigenvalue}
The operator $\Frob_q$ has the eigenvalue $1$ on
$H_c^1({\Gm}_{\bar{k}},\cH_{B,x,a})$.
\end{proposition}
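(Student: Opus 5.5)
I will follow the proof of Lemma~\ref{engenvalue 1} essentially verbatim, replacing $\cH_{a,x}$ by $\cH_{B,x,a}$. Let $j:\Gm\hookrightarrow\Pj^1$ be the inclusion. The short exact sequence $0\to j_!\cH_{B,x,a}\to j_*\cH_{B,x,a}\to j_*\cH_{B,x,a}/j_!\cH_{B,x,a}\to 0$ gives a long exact sequence of cohomology on $\Pj^1_{\bar k}$. Proposition~\ref{prop:ss-vanishing} gives $H^0(\Gm_{\bar k},\cH_{B,x,a})=0$. Hence we obtain a $\Frob_q$-equivariant injection
\[
(\cH_{B,x,a})_{\bar\eta}^{I_0}\oplus(\cH_{B,x,a})_{\bar\eta}^{I_\infty}\hookrightarrow H^1_c(\Gm_{\bar k},\cH_{B,x,a}).
\]
It therefore suffices to produce a nonzero $I_0$-invariant vector in $(\cH_{B,x,a})_{\bar\eta}$ fixed by $\Frob_q$.

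\textbf{Invariant vector in each factor.} Fix $\lambda\in\Lambda$. By \cite[Theorem 7.4.3]{KatzGKM}, $I_0$ acts unipotently on $\cK|_{L_\lambda}$ with a single Jordan block, and $\Frob_{q^{f_\lambda}}$ fixes a nonzero $v\in(\cK|_{L_\lambda})^{I_0}_{\bar\eta}$. Translation by $\xi_\lambda$ fixes $0$, so the same holds for $[\xi_\lambda]^*\cK|_{L_\lambda}$. Then $v^{b_\lambda}\in\Sym^{b_\lambda}$ is a nonzero vector; this uses characteristic zero of $\Ql$, where $\Sym$ is a quotient (equivalently, subspace) of the tensor power and $v^{\otimes b_\lambda}$ maps to a nonzero element. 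The vector $v^{b_\lambda}$ is $I_0$-invariant and fixed by $\Frob_{q^{f_\lambda}}$. So $V_\lambda:=(\cF_\lambda)_{\bar\eta}$ contains a nonzero $w_\lambda$ with these two properties.

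\textbf{Passing through tensor induction.} As in the proof of Lemma~\ref{engenvalue 1}, the vector $w_\lambda\otimes\cdots\otimes w_\lambda\in\TInd_{L_\lambda/k}(V_\lambda)$ ($f_\lambda$ factors) has the following properties. It is invariant under $I_0$: each conjugated factor $V_\lambda^{F^h}$ still has $I_0$ acting through an $F$-conjugate of $I_0$, which is again $I_0$ because $0$ is $k$-rational. By \eqref{F permute}, the lift $F$ of Frobenius sends it to $\Frob_{q^{f_\lambda}}(w_\lambda)\otimes w_\lambda\otimes\cdots=w_\lambda^{\otimes f_\lambda}$, so it is fixed by $\Frob_q$. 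Here one should take $F$ to be a lift of Frobenius inside the decomposition group at $0$, so that it normalizes $I_0$ and acts on $I_0$-invariants in a well-defined way.

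\textbf{Assembling the vector.} Tensoring over $\lambda\in\Lambda$ gives a nonzero vector $w$ in $(\mathcal M_{B,x})_{\bar\eta}^{I_0}$ fixed by $\Frob_q$. Pulling back by $\phi_r$ preserves the point $0$ and the $k$-structure, and restricts $I_0$ to a finite-index subgroup. So $w$ remains invariant and Frobenius-fixed in $\phi_r^*\mathcal M_{B,x}$. Finally, $\cL_{\psi_{-a}}$ is lisse at $0$ with Frobenius acting trivially on its stalk at $0$, since $\psi(0)=1$. Hence $w$ gives the required vector in $(\cH_{B,x,a})^{I_0}_{\bar\eta}$, and the injection above yields eigenvalue $1$.

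\textbf{Main obstacle.} The only step needing care is compatibility of Frobenius actions on $I_0$-invariants under tensor induction. One must choose the lift $F$ in the decomposition group $D_0$ and check that $F^{f_\lambda}$ acts on $V_\lambda^{I_0}$ as the local geometric Frobenius $\Frob_{q^{f_\lambda}}$ at $0$ for $L_\lambda$. I would handle this exactly as in Lemma~\ref{lem:Gx-basic}, with the point $0$ replacing $t$.
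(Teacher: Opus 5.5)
Your proposal is correct and follows the paper's proof step for step. It uses the same boundary injection from the long exact sequence together with Proposition~\ref{prop:ss-vanishing}, the vector $v_\lambda^{b_\lambda}$ in each symmetric power, its diagonal tensor in $\TInd_{L_\lambda/k}(\cF_\lambda)$, and then tensoring over $\lambda$ and passing to $\cH_{B,x,a}$. Your additional remarks make explicit two points the paper leaves implicit: the Frobenius lift $F$ must be chosen inside the decomposition group at $0$, and local Frobenius at $0$ acts trivially on the stalk of $\cL_{\psi_{-a}}$ (not merely unramifiedly).
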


\begin{proof}
By the exact sequence \eqref{long exact sequence} (change $\cH_{a,x}$ to $\cH_{B,x,a}$) and Proposition~\ref{prop:ss-vanishing}, we have a $\Frob_q$-equivariant injection
\[
(\cH_{B,x,a})_{\bar{\eta}}^{I_0}\oplus(\cH_{B,x,a})_{\bar{\eta}}^{I_\infty}
\hookrightarrow H_c^1({\Gm}_{\bar{k}},\cH_{B,x,a}).
\]
We have explained in Lemma~\ref{engenvalue 1} that for each $\lambda\in\Lambda$, the geometric Frobenius $\Frob_{L_{\lambda}}$ acts trivially on the rank-$1$ subspace $\left(\cK|_{L_{\lambda}}\right)_{\bar{\eta}}^{I_0}$. If $v_{\lambda}$ spans $\left(\cK|_{L_{\lambda}}\right)_{\bar{\eta}}^{I_0}$,
then $v_{\lambda}^{b_{\lambda}}$ is a nonzero $I_0$-invariant vector in
$\Sym^{b_{\lambda}}(\cK|_{L_{\lambda}})$ with trivial $\Frob_{L_{\lambda}}$-action. Multiplication by $\xi_{\lambda}$ fixes the boundary point $0$, so such a vector occurs in $\cF_{\lambda}=[\xi_{\lambda}]^*
\Sym^{b_{\lambda}}(\cK|_{L_{\lambda}})$. The same argument as Lemma~\ref{engenvalue 1} gives a nonzero element of $\left(\TInd_{L_{\lambda}/k}(\cF_{\lambda})\right)^{I_0}$ with trivial $\Frob_q$-action. The same assertion holds for $\mathcal{M}_{B,x}=
\bigotimes_{\lambda\in\Lambda}\TInd_{L_{\lambda}/k}(\cF_{\lambda})$ and therefore for $\cH_{B,x,a}=\phi_r^*\mathcal{M}_{B,x}\otimes\cL_{\psi_{-a}}$. This gives the result.
\end{proof}

\begin{proposition}\label{prop:ss-trace-estimate}
For every $a\in k^{\times}$ and $x\in B^{\times}$, one has
\begin{equation}\label{eq:ss-trace-estimate}
\left|S(B,r,x,a)+\varepsilon_Bq^{A_B}\right|
\leq\left(R_{B,x}-m_{B,x}(a)-1\right)
 q^{\frac{(r-1)N+1}{2}}.
\end{equation}
\end{proposition}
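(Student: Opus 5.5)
The proof parallels Proposition~\ref{prop:S-swan-estimate}, with the constant $\varepsilon_B q^{A_B}$ tracked through. The idea is to read off $S(B,r,x,a)$ from the single cohomology group $H_c^1({\Gm}_{\bar k},\cH_{B,x,a})$, peel off the Frobenius eigenvalue $1$, and bound the remaining eigenvalues by Deligne's theorem.

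\emph{Step 1.} Start from \eqref{eq:ss-sum-as-traces}. By Proposition~\ref{prop:ss-vanishing}, $H_c^2({\Gm}_{\bar k},\cH_{B,x,a})=0$, so only the $i=1$ term survives and
\[
S(B,r,x,a)=-\varepsilon_B q^{A_B}\operatorname{tr}\left(\Frob_q\mid H_c^1({\Gm}_{\bar k},\cH_{B,x,a})\right).
\]

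\emph{Step 2.} Compute the dimension of $H_c^1$ by the Grothendieck--Ogg--Shafarevich formula on $\Gm=\Pj^1\setminus\{0,\infty\}$. The sheaf $\cH_{B,x,a}$ is lisse on $\Gm$ and tame at $0$, and $H_c^0=H_c^2=0$. Hence $\dim H_c^1=-\chi_c=\Swan_\infty(\cH_{B,x,a})$. By Proposition~\ref{prop:ss-swan} this equals $R_{B,x}-m_{B,x}(a)$.

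\emph{Step 3.} Bound the eigenvalues. The sheaf $\cH_{B,x,a}$ is pure of weight $(r-1)M$, so by Deligne's theorem $H_c^1$ is mixed of weights $\le (r-1)M+1$. Every Frobenius eigenvalue $\alpha$ therefore satisfies $|\alpha|\le q^{((r-1)M+1)/2}$. By Proposition~\ref{prop:ss-boundary-eigenvalue}, one of these eigenvalues equals $1$. Removing it gives
\[
S(B,r,x,a)+\varepsilon_Bq^{A_B}=-\varepsilon_Bq^{A_B}\sum_{\alpha\ne\alpha_0}\alpha ,
\]
where $\alpha_0=1$ and the sum runs over the remaining $R_{B,x}-m_{B,x}(a)-1$ eigenvalues, counted with multiplicity. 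The triangle inequality bounds the right side by
\[
\left(R_{B,x}-m_{B,x}(a)-1\right)q^{A_B}q^{\frac{(r-1)M+1}{2}}.
\]

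\emph{Step 4.} Check the exponent. By \eqref{eq:ss-prefactors}, $A_B=\frac{(r-1)(N-M)}{2}$, so
\[
A_B+\frac{(r-1)M+1}{2}=\frac{(r-1)N+1}{2},
\]
which gives \eqref{eq:ss-trace-estimate}.

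The only delicate points are bookkeeping: the sign convention $(-1)^i$ in \eqref{eq:ss-sum-as-traces}, which is why the eigenvalue $1$ produces the term $+\varepsilon_Bq^{A_B}$ on the left, and the weight normalization. The substantive inputs (the vanishing of $H_c^2$, the Swan conductor, and the eigenvalue $1$) are already available.

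Finally, Theorem~\ref{thm:ss-main} follows by dividing by $q$ in Lemma~\ref{lem:ss-fourier}.
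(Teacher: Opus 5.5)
Your proposal is correct and follows the paper's proof step for step. You use Proposition~\ref{prop:ss-vanishing} to reduce to $H_c^1$, compute its dimension $R_{B,x}-m_{B,x}(a)$ via Grothendieck--Ogg--Shafarevich and Proposition~\ref{prop:ss-swan}, remove the eigenvalue $1$ from Proposition~\ref{prop:ss-boundary-eigenvalue}, and bound the remaining eigenvalues by Deligne's weight bound; your explicit check $A_B+\frac{(r-1)M+1}{2}=\frac{(r-1)N+1}{2}$ is just the bookkeeping the paper leaves implicit.
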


\begin{proof}
By \eqref{eq:ss-sum-as-traces} and Proposition~\ref{prop:ss-vanishing}, we have
\begin{equation}\label{eq:ss-cohomological-trace}
S(B,r,x,a)=-\varepsilon_Bq^{A_B}
\operatorname{tr}\left(\Frob_q\mid H_c^1({\Gm}_{\bar{k}},\cH_{B,x,a})\right).
\end{equation}
Since $\cH_{B,x,a}$ is tame at $0$, the
Grothendieck-Ogg-Shafarevich formula on
$\Gm=\Pj^1\setminus\{0,\infty\}$, together with
Propositions~\ref{prop:ss-swan} and \ref{prop:ss-vanishing}, gives
\begin{equation}\label{eq:ss-H1-dimension}
\dim H_c^1({\Gm}_{\bar{k}},\cH_{B,x,a})=\Swan_0(\cH_{B,x,a})+\Swan_\infty(\cH_{B,x,a})=R_{B,x}-m_{B,x}(a).
\end{equation}
The sheaf $\cH_{B,x,a}$ is pure of weight $(r-1)M$, so Deligne's theorem gives that $H_c^1({\Gm}_{\bar{k}},\cH_{B,x,a})$ is mixed of weight $\le (r-1)M+1$. Proposition~\ref{prop:ss-boundary-eigenvalue} provides
one $\Frob_q$-eigenvalue equal to $1$. Subtracting this eigenvalue from the trace in
\eqref{eq:ss-cohomological-trace} and applying the triangle inequality give
\eqref{eq:ss-trace-estimate} and hence \eqref{eq:ss-main}. This finishes the proof of Theorem~\ref{thm:ss-main}.
\end{proof}

\begin{remark}\label{rem:ss-field-specialization}
If $B=E=\F_{q^n}$, then $s=d_1=1,~n_1=n$, and $N=M=n$. The unique primary block in \eqref{eq:ss-J-block} has $a_{11}=b_{11}=1$ and $\xi_{11}=x$. Consequently,
\[
R_{B,x}=r^n,\qquad m_{B,x}(a)=m_x(a),\qquad
A_B=0,\qquad \varepsilon_B=(-1)^{r-1}.
\]
Thus, \eqref{eq:ss-main} specializes exactly to \eqref{eq:main}.
\end{remark}

\section*{Acknowledgements}

The authors are very grateful to Professor Daqing Wan for introducing this problem at a conference held at Nanjing University in April 2026 and for encouraging the authors to investigate it. The third author would like to thank Professor Weizhe Zheng and Xiangdong Wu for helpful discussions. The first author is supported by NSFC 12231009.


\begin{thebibliography}{99}

\bibitem{SGA45}
P. Deligne and J.-F. Boutot, Cohomologie \'etale: les points de d\'epart, in {\it Cohomologie \'etale}, 4--75, Lecture Notes in Math., 569, Springer, Berlin, 1977.

\bibitem{EichlerGauss}
M. Eichler, Allgemeine Kongruenzklasseneinteilungen der Ideale einfacher Algebren \"uber algebraischen Zahlk\"orper und ihre $L$-Reihen, J. Reine Angew. Math. {\bf 179} (1938), 227--251.

\bibitem{KatzGKM}
N.~M. Katz, Gauss sums, Kloosterman sums, and monodromy groups, Annals of Mathematics Studies, 116, Princeton Univ. Press, Princeton, NJ, 1988.

\bibitem{KatzESDE}
N.~M. Katz, Exponential sums and differential equations, Annals of Mathematics Studies, 124, Princeton Univ. Press, Princeton, NJ, 1990.

\bibitem{KatzSoto}
N.~M. Katz, Estimates for Soto-Andrade sums, J. Reine Angew. Math. {\bf 438} (1993), 143--161.


\bibitem{KimGauss}
D.~S. Kim, Gauss sums for general and special linear groups over a finite field, Arch. Math. (Basel) {\bf 69} (1997), no.~4, 297--304.

\bibitem{LamprechtGauss}
E. Lamprecht, Struktur und Relationen allgemeiner Gausscher Summen in endlichen Ringen. I, II, J. Reine Angew. Math. {\bf 197} (1957), 1--26, 27--48.

\bibitem{LiHuGauss}
Y. Li and S. Hu, Gauss sums over some matrix groups, J. Number Theory {\bf 132} (2012), no.~12, 2967--2976.

\bibitem{LinWanEtale}
X. Lin and D. Wan, Counting elements with given trace and norm in \'etale algebras, Int. J. Number Theory {\bf 21} (2025), no.~8, 1955--1965.


\bibitem{MoisioKloosterman}
M.~J. Moisio, Kloosterman sums, elliptic curves, and irreducible polynomials with prescribed trace and norm, Acta Arith. {\bf 132} (2008), no.~4, 329--350.

\bibitem{MoisioWan}
M.~J. Moisio and D. Wan, On Katz's bound for the number of elements with given trace and norm, J. Reine Angew. Math. {\bf 638} (2010), 69--74.

\bibitem{RojasLeonTraceNorm}
A. Rojas-Le\'on, Rationality of trace and norm $L$-functions, Duke Math. J. {\bf 161} (2012), no.~9, 1751--1795.


\bibitem{RojasLeonWanMoments}
A. Rojas-Le\'on and D. Wan, Moment zeta functions for toric Calabi-Yau hypersurfaces, Commun. Number Theory Phys. {\bf 1} (2007), no.~3, 539--578.

\bibitem{WanLectures}
D. Wan, Lectures on zeta functions over finite fields, in {\it Higher-dimensional geometry over finite fields}, 244--268, NATO Sci. Peace Secur. Ser. D Inf. Commun. Secur., 16, IOS, Amsterdam, 2008.

\bibitem{WanNormTrace}
D. Wan, Norm-trace and Kloosterman sums in finite semisimple algebras, Frontiers in Combinatorics and Number Theory \textbf{1} (2026), 74--88.

\bibitem{ZelingherMatrix}
E. Zelingher, On matrix Kloosterman sums and Hall-Littlewood polynomials, Trans. Amer. Math. Soc. {\bf 378} (2025), no.~5, 3597--3623.
\end{thebibliography}
\end{document}